\setlist[itemize]{topsep=0ex,itemsep=0ex,parsep=0.4ex}
\setlist[enumerate]{topsep=0ex,itemsep=0ex,parsep=0.4ex}
\newtheorem{theorem}[subsection]{Theorem}
\newtheorem{lemma}[theorem]{Lemma}
\newtheorem{claim}[theorem]{Claim}
\newtheorem*{claim*}{Claim}
\newtheorem{conjecture}[theorem]{Conjecture}
\newtheorem{question}[theorem]{Question}
\theoremstyle{definition}
\newenvironment{poc}{\begin{proof}}{\end{proof}}
\renewcommand{\ge}{\geqslant}
\renewcommand{\le}{\leqslant}
\renewcommand{\emptyset}{\varnothing}
\newcommand{\defn}[1]{\textcolor{Maroon}{\emph{#1}}}
\newcommand*{\cF}{\mathcal{F}}
\newcommand*{\cO}{\mathcal{O}}
\newcommand*{\cI}{\mathcal{I}}
\newcommand*{\cR}{\mathcal{R}}
\newcommand*{\cS}{\mathcal{S}}
\newcommand*{\cP}{\mathcal{P}}
\newcommand*{\cQ}{\mathcal{Q}}
\newcommand{\pos}[2]{i(#1,#2)}
\newcommand{\spos}[1]{i(#1)}
\newcommand{\slice}[2][]{S_{#1}(#2)}
\newcommand{\sliceperm}[2]{#1(#2)}
\DeclarePairedDelimiter{\set}{\{}{\}}
\DeclarePairedDelimiter{\abs}{\lvert}{\rvert}
\DeclarePairedDelimiter{\floor}{\lfloor}{\rfloor}
\DeclarePairedDelimiter{\ceil}{\lceil}{\rceil}
\title{Small families of partially shattering permutations}
\date{\today}
\author{
Ant\'onio Gir\~ao\footnotemark[2] \and Lukas Michel\footnotemark[2] \and Youri Tamitegama\footnotemark[2]
}
\begin{document}
\maketitle

\renewcommand{\thefootnote}{\fnsymbol{footnote}} 
\footnotetext[2]{Mathematical Institute, University of Oxford, United Kingdom
(\textsf{\{\href{mailto:girao@maths.ox.ac.uk}{girao},\href{mailto:michel@maths.ox.ac.uk}{michel},\href{mailto:tamitegama@maths.ox.ac.uk}{tamitegama}\}@maths.\allowbreak ox.ac.uk}).}

\renewcommand{\thefootnote}{\arabic{footnote}} 
\begin{abstract}
    We say that a family of permutations $t$-shatters a set if it induces at least $t$ distinct permutations on that set. What is the minimum number $f_k(n,t)$ of permutations of $\set{1, \dots, n}$ that $t$-shatter all subsets of size $k$? For $t \le 2$, $f_k(n,t) = \Theta(1)$. Spencer showed that $f_k(n,t) = \Theta(\log \log n)$ for $3 \le t \le k$ and $f_k(n,k!) = \Theta(\log n)$. In 1996, F\"uredi asked whether partial shattering with permutations must always fall into one of these three regimes. Johnson and Wickes recently settled the case $k = 3$ affirmatively and proved that $f_k(n,t) = \Theta(\log n)$ for $t > 2 (k-1)!$.
    
    We give a surprising negative answer to the question of F\"uredi by showing that a fourth regime exists for $k \ge 4$. We establish that $f_k(n,t) = \Theta(\sqrt{\log n})$ for certain values of $t$ and prove that this is the only other regime when $k = 4$. We also show that $f_k(n,t) = \Theta(\log n)$ for $t > 2^{k-1}$. This greatly narrows the range of $t$ for which the asymptotic behaviour of $f_k(n,t)$ is unknown.
\end{abstract}

\section{Introduction}

A family $\cP$ of permutations of $[n] = \set{1, \dots, n}$ \defn{shatters} a set $X \subseteq [n]$ if the permutations of $\cP$ induce every possible permutation on the elements of $X$. Shattering families of permutations were first studied by Spencer~\cite{spencer1972minimal} who asked the following question.
\begin{quote}
    What is the smallest family of permutations of $[n]$ that shatters all subsets of a fixed size $k$?
\end{quote}
Spencer~\cite{spencer1972minimal} showed that such families have size $\Theta(\log n)$, with subsequent work improving the constant of the lower bound \cite{ishigami1994extremal, furedi1996scrambling, radhakrishnan2003note}.

A natural refinement of this problem is to consider partial shattering. For $t \ge 1$, we say that a family $\cP$ \defn{$t$-shatters} a set $X$ if $\cP$ induces at least $t$ distinct permutations on $X$. Let $f_k(n,t)$ be the minimum number of permutations of $[n]$ that $t$-shatter all subsets of size~$k$. From above we know that $f_k(n,k!) = \Theta(\log n)$, and monotone permutations can be used to prove that $f_k(n,t) = t$ for $t \le 2$. Moreover, an argument of Hajnal and Spencer~\cite{spencer1972minimal} shows that $f_k(n,t) = \Theta(\log \log n)$ for $3 \le t \le k$.\footnote{Spencer~\cite{spencer1972minimal} considered the slightly different problem of determining the minimum number of permutations such that for every subset $X \subseteq [n]$ of size $k$ and every element $x \in X$, there is a permutation where $x$ is the largest element of $X$. Johnson and Wickes~\cite{johnson2023shattering} observed that this transfers to $f_k(n,t)$.} Therefore, the asymptotic behaviour of $f_k(n,t)$ falls into at least three distinct regimes.

In 1996, F\"uredi~\cite{furedi1996scrambling} asked whether these might be the only possible regimes, even in a much more general version of partial shattering. Let $\cS$ be a family of sets of permutations of~$[k]$. Then, a family $\cP$ of permutations of $[n]$ is \defn{$\cS$-mixing} if for every subset $X \subseteq [n]$ of size $k$, the set of permutations that $\cP$ induces on $X$ is a member of $\cS$. Moreover, $\cS$ is \defn{monotone} if for all $S \in \cS$ and $S \subseteq T$ we have $T \in \cS$. If $\cS$ is the family of sets with at least $t$ permutations of $[k]$, then $\cS$ is monotone and $\cS$-mixing families are exactly those families that $t$-shatter all subsets of size $k$.

Even in this more general $\cS$-mixing framework, the minimum size of an $\cS$-mixing family in all previously known cases was in one of the three regimes $\Theta(1)$, $\Theta(\log \log n)$, and $\Theta(\log n)$. This prompted F\"uredi~\cite{furedi1996scrambling} to ask the following question.

\begin{question}[F\"uredi, 1996]
    If $\cS$ is a monotone family of sets of permutations of $[k]$, is the minimum size of an $\cS$-mixing family either $\Theta(1)$, $\Theta(\log \log n)$, or $\Theta(\log n)$?
\end{question}

Johnson and Wickes~\cite{johnson2023shattering} recently made progress on this question for $f_k(n,t)$. They showed that $f_k(n,t) = \Theta(\log n)$ for $t > 2 (k-1)!$. Together with the previously known asymptotics, this yields the following partial classification.

\begin{theorem}[Johnson, Wickes, 2023]\label{thm:prevbounds}
    Let $k \ge 3$ be an integer. Then,
    \[
        f_k(n,t) = \begin{cases}
            t & \text{for } t \le 2, \\
            \Theta(\log \log n) & \text{for } 3 \le t \le k, \\
            \Theta(\log n) & \text{for } 2 (k-1)! < t \le k!.
        \end{cases}
    \]
\end{theorem}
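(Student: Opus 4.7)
The plan is to treat the three regimes separately, since the statement is a compilation of results whose proofs differ substantially in character.

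For $t \le 2$ the claim is immediate from explicit constructions: the identity permutation alone gives $f_k(n,1) \le 1$, and the identity together with its reverse induces two different orderings on every $k$-subset (since $k \ge 2$), yielding $f_k(n,2) \le 2$. Matching lower bounds are trivial, so $f_k(n,t) = t$.

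For $3 \le t \le k$, I would follow the Hajnal--Spencer argument, which it suffices to run at $t = k$. The upper bound uses a recursive block construction: partition $[n]$ into doubly-exponentially nested blocks, and define $O(\log \log n)$ permutations that are monotone on small blocks but reorder blocks in varied ways, ensuring that any $k$-subset acquires at least $k$ distinct induced orderings. For the lower bound, I invoke the equivalence from the footnote (for each $x \in X$, some $\pi \in \cP$ puts $x$ at the top of $X$). Given $\abs{\cP} = m$ permutations, encode each element of $[n]$ by its tuple of positions in $[n]^m$, and apply Erd\H{o}s--Szekeres iteratively on the $m$ coordinates to extract a subset of size roughly $n^{1/(k-1)^m}$ that is linearly ordered by the product order. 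On this subset every $\pi \in \cP$ induces the same ordering, so no element other than the extremal one can be a max; to avoid a bad $k$-set one needs $n^{1/(k-1)^m} < k$, forcing $m = \Omega(\log \log n)$.

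For $2(k-1)! < t \le k!$, the upper bound $O(\log n)$ follows for the maximal $t = k!$ via the standard probabilistic argument: $m = C_k \log n$ uniformly random permutations fully shatter all $\binom{n}{k}$ subsets, since the probability that a fixed $k$-set is not fully shattered is at most $k! (1 - 1/k!)^m$. The lower bound is due to Johnson and Wickes. Their scheme, as I would replicate it, is to suppose $\abs{\cP} = o(\log n)$, encode each element of $[n]$ as its position tuple across $\cP$, apply Erd\H{o}s--Szekeres iteratively to obtain a monotone chain under all but one permutation of the family, and then argue that on such a chain at most $2(k-1)!$ distinct orderings can ever appear on a $k$-subset---the factor $2$ corresponding to the two possible monotone orientations and $(k-1)!$ to the residual freedom after the direction is fixed.

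The main obstacle is the lower bound in the third regime: nailing down the precise threshold $2(k-1)!$ rather than a cruder constant requires a careful structural analysis of how monotone configurations across the family compose, and it is this combinatorial step---separating the nearly-monotone behaviour of $\abs{\cP}-1$ permutations from the unconstrained last one---that is the heart of the Johnson--Wickes contribution and is not accessible via routine averaging.
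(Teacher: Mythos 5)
This theorem is not proved in the paper at all: it is a compilation of results quoted from Spencer, Hajnal--Spencer, and Johnson--Wickes, and the only in-paper material bearing on it is \cref{thm:lblogn}, whose $\cP$-ordered-pair argument strengthens the third case (since $2^{k-1}\le 2(k-1)!$ for $k\ge 3$). Your treatment of the first two regimes is the standard one and is essentially fine; the only quibble is that on the subset produced by iterated Erd\H{o}s--Szekeres the permutations agree only up to reversal, so a $k$-subset receives at most $2$ induced orders (not $1$), which still defeats $3$-shattering and gives $f_k(n,3)=\Omega(\log\log n)$. The upper bound for $t=k!$ by random permutations is also standard and correct.

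The genuine gap is in your lower bound for the third regime, in two ways. First, the mechanism cannot give $\Omega(\log n)$: each application of Erd\H{o}s--Szekeres costs at least a square root, so after handling $m$ permutations you are left with a set of size about $n^{1/2^{m}}$, and requiring this to drop below $k$ only forces $m=\Omega(\log\log n)$. To reach $\Omega(\log n)$ you need a step that loses only a \emph{constant factor} per permutation, which is exactly what \cref{lem:pgoodpair} provides: intersecting with the top half under $\rho_{i+1}$ halves the pair while keeping it ordered, so $m$ permutations still leave sets of size $n/2^{O(m)}$. Second, your counting is attached to the wrong structure: on a chain that is monotone under all but one permutation, the family induces at most $3$ orderings on any $k$-subset, not $2(k-1)!$; and such a chain is unobtainable at the $\Omega(\log n)$ scale anyway. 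The threshold $2(k-1)!$ actually arises from finding two elements $x,y$ and a large set $S$ such that no element of $S$ ever separates $x$ from $y$ in any $\rho\in\cP$ (two nested applications of \cref{lem:pgoodpair} suffice); then on $\{x,y\}\cup S'$ the pair $\{x,y\}$ behaves as a block, giving at most $(k-1)!$ orderings of the blocks times $2$ for the internal order. The nested version of this idea is precisely the proof of \cref{thm:lblogn}, which yields the stronger bound $2^{k-1}$ and subsumes the case you are trying to prove.
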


Moreover, Johnson and Wickes~\cite{johnson2023shattering} settled the case $k = 3$ completely by additionally proving that $f_3(n,4) = \Theta(\log \log n)$. Given these results, they reiterated F\"uredi's question and asked specifically whether $f_k(n,t)$ must always fall into one of the three regimes $\Theta(1)$, $\Theta(\log \log n)$, and $\Theta(\log n)$ \cite{baber2023collection,johnson2023shattering}.

We answer the questions of F\"uredi and of Johnson and Wickes negatively. For $k \ge 4$, we show that a fourth regime exists with $f_k(n,t) = \Theta(\sqrt{\log n})$. More generally, we improve the partial classification of the asymptotic behaviour of $f_k(n,t)$ as follows.

\begin{theorem}\label{thm:classification}
    Let $k \ge 4$ be an integer. Then,
    \[
        f_k(n,t) = \begin{cases}
            t & \text{for } t \le 2, \\
            \Theta(\log \log n) & \text{for } 3 \le t \le 2^{\ceil{\log_2 k}}, \\
            \Theta(\sqrt{\log n}) & \text{for } 2^{\ceil{\log_2 k}} < t \le \min\set{2k, 2^{\ceil{\log_2 k}} + 4}, \\
            \Theta(\log n) & \text{for } 2^{k-1} < t \le k!.
        \end{cases}
    \]
\end{theorem}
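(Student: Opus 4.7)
The proof divides according to the four regimes. The bound $f_k(n,t) = t$ for $t \le 2$ is trivial (use two monotone permutations), the lower bound $f_k(n,t) = \Omega(\log \log n)$ for $t \ge 3$, and the upper bound $f_k(n,t) = O(\log n)$ for $t \le k!$ are already contained in \Cref{thm:prevbounds}. Hence the new content reduces to four ingredients: (U1) $f_k(n,t) = O(\log \log n)$ for $3 \le t \le 2^{\ceil{\log_2 k}}$; (U2) $f_k(n,t) = O(\sqrt{\log n})$ throughout the intermediate range; (L1) $f_k(n,t) = \Omega(\sqrt{\log n})$ for $t > 2^{\ceil{\log_2 k}}$; and (L2) $f_k(n,t) = \Omega(\log n)$ for $t > 2^{k-1}$. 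The theorem then follows by combining these four statements with \Cref{thm:prevbounds}.

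For (U1), I would refine the classical Hajnal--Spencer recursive-halving construction, which produces $O(\log \log n)$ permutations guaranteeing that every element of every $k$-subset is the maximum under some permutation, thereby yielding $k$ distinct induced orderings. To lift this guarantee to $2^{\ceil{\log_2 k}}$, I would modify the construction so that every $k$-subset sees all orderings obtained by fixing the $\ceil{\log_2 k}$-bit binary address of a ``distinguished'' position; the point is that $\ceil{\log_2 k}$ bits suffice to index the $k$ elements of a subset. The overall size of the family remains $O(\log \log n)$, since only $O(1)$ additional permutations are appended at each halving level.

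For the intermediate regime, the upper bound (U2) is a product-type construction. I would decompose $[n]$ via a bijection with $[\sqrt{n}] \times [\sqrt{n}]$ and run two coupled Hajnal--Spencer-like stages, one per coordinate, each of size $O(\sqrt{\log n})$. Superimposing the two stages gives on every $k$-subset at least $\min\set{2k, 2^{\ceil{\log_2 k}} + 4}$ induced orderings, by casework on whether the subset spreads across many fibres of the product (so the outer stage suffices) or concentrates in a single fibre (so the inner stage takes over). The matching lower bound (L1) is the main technical obstacle of the paper: one must show that any family of $o(\sqrt{\log n})$ permutations leaves some $k$-subset with at most $2^{\ceil{\log_2 k}}$ induced orderings. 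My plan is an iterative restriction argument: given $m = o(\sqrt{\log n})$ permutations, repeatedly pass to a large sub-interval of $[n]$ on which an increasing number of pairwise comparisons are simultaneously fixed across the entire family, eventually isolating a $k$-subset whose induced order profile is too restricted; it is the doubly-exponential shrinkage of the admissible sub-intervals that pins the lower bound at $\sqrt{\log n}$ rather than $\log \log n$.

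Finally, (L2) sharpens the Johnson--Wickes $\Omega(\log n)$ bound from threshold $2(k-1)!$ down to $2^{k-1}$. Given a family of $m$ permutations, I would build a bad $k$-subset $\set{x_1, \dots, x_k}$ greedily so that each newly chosen element $x_i$ admits, across all permutations simultaneously, at most two possible relative positions with respect to $\set{x_1, \dots, x_{i-1}}$; this yields at most $2^{k-1}$ induced total orderings on the subset. A pigeonhole argument on the combinatorial type of the $m$ permutations shows that such a greedy choice is always possible provided $n \ge 2^{\Omega(m)}$, forcing $m = \Omega(\log n)$. Across the whole proof, the most delicate step is (L1), since neither the construction shape nor the precise threshold $2^{\ceil{\log_2 k}}$ have any natural precedent in the existing literature on shattering permutations.
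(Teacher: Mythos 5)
Your top-level decomposition is exactly right: the theorem reduces to the four new ingredients (U1), (U2), (L1), (L2) combined with \cref{thm:prevbounds}, and your sketch of (L2) matches the actual argument (nested $\cP$-ordered pairs obtained by pigeonhole, each new element having only two admissible positions relative to the later ones, giving at most $2^{k-1}$ induced orders once $n \ge 2^{k(m+1)}$). However, (L1) and (U2) — which you correctly identify as the heart of the matter — have genuine gaps. For (L1), your ``iterative restriction to sub-intervals with doubly-exponential shrinkage'' cannot produce a $\sqrt{\log n}$ bound: doubly-exponential shrinkage is precisely what yields $\Omega(\log\log n)$, and a simple chain of nested restrictions only ever certifies the $2^{k-1}$ bound of (L2), not $2^{\ceil{\log_2 k}}$. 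The missing idea is to branch: one builds a complete \emph{binary tree} of nested $\cP$-ordered pairs (recursing into both halves $A$ and $B$), colours each node by the $2^m$-valued vector recording, for each $\rho \in \cP$, which half precedes the other, and then applies a Ramsey-type lemma on vertex-coloured binary trees to extract a subdivided complete binary tree of height $h = \ceil{\log_2 k}$ with monochromatic layers. Monochromatic layers mean the comparisons at all nodes of a layer are \emph{synchronised} across every $\rho \in \cP$, so the $2^h$ leaves yield a $k$-set on which $\cP$ induces at most $2^h$ permutations. The quantitative trade-off is single-exponential shrinkage ($2^{m+1}$ per tree level) against a tree of height $\Theta(h^2 m)$, i.e.\ linear in $m$; this product $2^{\Theta(m^2)}$ is what pins the bound at $\sqrt{\log n}$.

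For (U2), a two-fold product $[\sqrt{n}] \times [\sqrt{n}]$ with one stage per coordinate does not give families of size $O(\sqrt{\log n})$: a full shattering stage on $[\sqrt n]$ costs $\Theta(\log n)$ and a Hajnal--Spencer stage costs $\Theta(\log\log n)$, and iterating the square-root decomposition recursively collapses back to $\Theta(\log\log n)$. What is actually needed is a $d$-fold product, identifying $[n]$ with $[2^d]^d$ for $d = \ceil{\sqrt{\log_2 n}}$, together with lexicographic permutations built from a shattering family of $[2^d \cdot d]$, which has size $O(\log(2^d d)) = O(\sqrt{\log n})$. Your case split (spread across many fibres vs.\ concentrated in one) also misses where the ``$+4$'' comes from: the lex-permutations alone only guarantee $2^{\ceil{\log_2 k}}$-shattering in the worst case, and one must prove that any $k$-set \emph{not} already $2k$-shattered has a rigid structure (exactly $\ceil{\log_2 k}$ branching coordinates, every slice of size two, slices at equal positions equal), and then add, for each coordinate $i$, four extra permutations that sort by coordinate $i$ first; these are provably non-lexicographic on such rigid sets and contribute four new induced orders. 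Similarly for (U1), the mechanism by which the lexicographic construction achieves $2^{\ceil{\log_2 k}}$ rather than merely $k$ induced orders is a product-of-factorials count over the branching coordinates of the $k$-set, which your ``distinguished position with a $\ceil{\log_2 k}$-bit address'' does not supply. As written, the proposal establishes only the frame of the proof and (L2); the constructions and the tree-Ramsey argument carrying the two $\sqrt{\log n}$ bounds are absent.
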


For $k = 4$, this settles the asymptotic behaviour of $f_4(n,t)$ completely as all values of $t$ are covered. However, if $k$ is large, there is still an exponential gap between the regime $\Theta(\sqrt{\log n})$ and $\Theta(\log n)$.

\cref{thm:classification} is based on a series of new lower and upper bounds on $f_k(n,t)$. First, we show that the lower bound $f_k(n,t) = \Omega(\log n)$ holds for a wider range of values of $t$.

\begin{theorem}\label{thm:lblogn}
    Let $k \ge 3$ and $t > 2^{k-1}$. Then, $f_k(n,t) = \Omega(\log n)$.
\end{theorem}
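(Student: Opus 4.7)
The plan is to show that any family $\cP = \{\pi_1, \dots, \pi_m\}$ of permutations of $[n]$ admits a $k$-subset on which $\cP$ induces at most $2^{k-1}$ distinct permutations, whenever $n \ge 2^{(k-1)m}$ up to lower-order corrections. Since $t > 2^{k-1}$, such a set cannot be $t$-shattered by $\cP$, and the contrapositive forces $m = \Omega(\log n)$.

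The subset is produced by an iterated pigeonhole on sign vectors. For distinct $a, b \in [n]$, let $s(a, b) \in \{0,1\}^m$ record whether $\pi_\ell(a) < \pi_\ell(b)$ for each $\ell$; the vector takes at most $2^m$ values. Starting from $Y_0 = [n]$, I would iteratively pick $x_{i+1} \in Y_i$ arbitrarily and define $Y_{i+1}$ to be the largest class when $Y_i \setminus \{x_{i+1}\}$ is partitioned according to the value of $s(x_{i+1}, \cdot)$, so that $|Y_{i+1}| \ge (|Y_i| - 1)/2^m$. Running this for $k-1$ rounds loses a factor of $2^{(k-1)m}$, so the procedure terminates successfully with $X = \{x_1, \dots, x_k\}$ provided, say, $n \ge 2^{(k-1)m} + k$.

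By construction, for every $j < k$ and every $\pi_\ell$ the vector $s(x_j, \cdot)$ is constant on $Y_j \supseteq \{x_{j+1}, \dots, x_k\}$; equivalently, $\pi_\ell(x_j)$ is either the minimum or the maximum of $\{\pi_\ell(x_j), \dots, \pi_\ell(x_k)\}$. Hence every induced permutation $\sigma \in S_k$ of $\cP$ on $X$ has the property that each of its first $k-1$ entries is the extremum of the suffix starting there. A direct count shows that there are exactly $2^{k-1}$ such permutations of $[k]$: at each of the $k-1$ non-final positions there is an independent binary choice between ``min'' and ``max'', after which the final entry is forced. Therefore $\cP$ induces at most $2^{k-1} < t$ permutations on $X$, contradicting $t$-shattering.

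No step of this argument is genuinely difficult; it is essentially a textbook iterated-pigeonhole construction paired with a short combinatorial count. The only conceptually meaningful observation is that the number $2^{k-1}$ of ``extremal-prefix'' permutations coincides exactly with the threshold in the statement, which is why the lower bound jumps to $\Omega(\log n)$ precisely when $t$ crosses this value. Finally, unwinding the recursion $|Y_{i+1}| \ge \lceil (|Y_i|-1)/2^m \rceil$ with $|Y_0| = n$ delivers the quantitative bound $m \ge (\log_2 n)/(k-1) - O(1)$.
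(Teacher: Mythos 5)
Your argument is correct and is essentially the paper's own proof: both construct a chain $x_1,\dots,x_k$ together with nested sets guaranteeing that each $x_j$ compares uniformly, in every permutation of $\cP$, with all of $\{x_{j+1},\dots,x_k\}$, and then observe that at most $2^{k-1}$ total orders are compatible with these $k-1$ binary constraints. The only cosmetic difference is that you pigeonhole directly on the sign vector of a single arbitrarily chosen element, whereas the paper first extracts a balanced $\cP$-ordered pair $(A_i,B_i)$ via a separate lemma (which it reuses for the $\Omega(\sqrt{\log n})$ bound) and picks $x_i \in B_i$; the constants in the exponent differ slightly, but both yield $m \ge \log_2 n / k - O(1) = \Omega(\log n)$ for fixed $k$.
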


The main observation for this result is that for any small family $\cP$ of permutations of~$[n]$, we can construct two large subsets $A, B \subseteq [n]$ that are ordered in the following sense: for each permutation of $\cP$ either all elements of $A$ are smaller than all elements of $B$, or all elements of $B$ are smaller than all elements of $A$. By recursively constructing ordered sets in $A$, we find a subset of size $k$ that is only $2^{k-1}$-shattered by $\cP$.

For smaller values of $t$, we provide a new lower bound of the form $\Omega(\sqrt{\log n})$.

\begin{theorem}\label{thm:lbsqrtlogn}
    Let $k \ge 3$ and $t > 2^{\ceil{\log_2 k}}$. Then, $f_k(n,t) = \Omega(\sqrt{\log n})$.
\end{theorem}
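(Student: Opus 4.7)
The plan is to prove the contrapositive: if $\cP$ is a family of $m$ permutations of $[n]$ with $m \le c\sqrt{\log n}$ for a suitably small constant $c>0$, then we exhibit a $k$-subset $S \subseteq [n]$ on which $\cP$ induces at most $2^r$ distinct orderings, where $r = \lceil \log_2 k \rceil$. The structural target is a set of $2^r$ elements organized as the leaves of a balanced binary tree of depth $r$, indexed by $\{0,1\}^r$, such that every $\pi \in \cP$ orders them according to a sign-flipped lexicographic order parameterized by $r$ bits (one per tree level). This ``flippable binary tree'' admits at most $2^r$ distinct orderings, so any $k$ of its leaves form the desired $k$-subset.

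The central technical tool is a \emph{matched ordered pair lemma}: given any family of $m$ permutations of a set of size $N$, one can extract disjoint subsets $A, B$ of common size $L$ together with a bijection $\phi \colon A \to B$ such that (i) each $\pi \in \cP$ block-orders $A$ before $B$ or $B$ before $A$, and (ii) $\phi$ either preserves or uniformly reverses the $\pi$-induced order on $A$. The quantitative guarantee we aim for is of the form $\log L \ge \tfrac{1}{2}\log N - O(m^2)$, i.e.\ each application loses a single square root plus an additive $O(m^2)$ cost. We prove this by starting from the median split of $\pi_1$ and then iteratively incorporating each subsequent $\pi_i$: within the current matched pair we apply Erd\H{o}s--Szekeres in parallel on both halves so that $\phi$ becomes $\pi_i$-monotone, and then trim to restore block-orderedness in $\pi_i$. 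The key point is that block-ordering and matching must be enforced jointly within each step so that the total cost is additive in $m$ rather than multiplicative.

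Given the lemma, we iterate $r$ times. Apply it to $[n]$ to obtain $(A^{(1)}, B^{(1)}, \phi^{(1)})$. Apply the lemma again inside $A^{(1)}$ to obtain a new matched pair, and transport it to $B^{(1)}$ via $\phi^{(1)}$: by property (ii) of the matching, the inner block-ordering and matching automatically lift to $B^{(1)}$. After $r$ rounds we have $2^r$ mutually matched leaf cells, and the compositions of the bijections $\phi^{(j)}$ ensure that every $\pi$ acts consistently across sibling cells at each level. Selecting one element per leaf cell yields the flippable binary tree. Unrolling the recursion $\log L_{j+1} \ge \tfrac{1}{2}\log L_j - O(m^2)$ gives $\log L_r \ge (\log n)/2^r - O(m^2)$, so $L_r \ge 1$ forces $\log n \ge \Omega(2^r m^2)$; for constant $r = \lceil \log_2 k \rceil$ this rearranges to $m = \Omega(\sqrt{\log n})$.

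The main obstacle is obtaining the additive-in-$m^2$ (rather than multiplicative-in-$2^m$) cost in the matched ordered pair lemma. A naive iteration of Erd\H{o}s--Szekeres---first aligning $\phi$ with $\pi_i$ on $A$, then separately on $B$, then trimming---squares the exponent at each of the $m-1$ inductive steps and yields only $L \ge N^{1/2^{m-1}}$, which on recursion bottoms out at the weaker $\log\log n$ threshold of \cref{thm:prevbounds}. Achieving the improved $\sqrt{\log n}$ bound hinges on performing the Erd\H{o}s--Szekeres refinement once per permutation on \emph{matched indices} (so the $A$- and $B$-side monotonicities are found together), which replaces the exponential cost by a polynomial one and is the technical heart of the argument.
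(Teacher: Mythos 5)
Your high-level target is the right one and matches the paper's: a depth-$h$ binary tree ($h=\ceil{\log_2 k}$) of nested $\cP$-ordered pairs whose sibling pairs are \emph{synchronised} level by level, so that each permutation contributes only one orientation bit per level and at most $2^h$ orders are induced on the leaves. The gap is in how you propose to achieve the synchronisation. Your matched ordered pair lemma, with the guarantee $\log L \ge \tfrac12\log N - O(m^2)$, is false once $m\ge 4$. Asking for a bijection $\phi\colon A\to B$ that preserves or reverses every $\pi\in\cP$ is asking for an $L$-subset of $A$ and an $L$-subset of $B$ realising the same $m$-dimensional order pattern (up to coordinatewise reversal), and a first-moment count rules this out: for uniformly random $\pi_1,\dots,\pi_m$, the expected number of admissible triples $(A,B,\phi)$ is at most $\binom{N}{L}^2 L!\,(2/L!)^m \le 2^m\bigl(e^{m+1}N^2/L^{m+1}\bigr)^L$, which is below $1$ for $L\ge 3N^{2/(m+1)}$. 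So for $m\ge 4$ there need not exist any matched pair of size $\sqrt N/2^{O(m^2)}$; the best conceivable rate is $L\approx N^{2/(m+1)}$, and unrolling that recursion $h$ times yields only $m=O((\log n)^{1/h})$, weaker than $\sqrt{\log n}$ for every $k\ge 5$. The obstruction is information-theoretic, not an artefact of iterating Erd\H{o}s--Szekeres naively, so no interleaving of the refinements can reduce $m$ independent square-root constraints to a single one. (There is also a secondary accounting issue: since $\phi^{(1)}$ may reverse some $\pi$ and preserve others, transporting the second-level pair through $\phi^{(1)}$ produces sibling orientations that agree for some $\pi$ and disagree for others, so each level can contribute two bits rather than one and the leaf count degrades to $2^{2h-1}$, overshooting the threshold $t>2^h$.)

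The paper gets synchronisation without any matching, and this is the idea you are missing. It builds the \emph{entire} complete binary tree of nested $\cP$-ordered pairs using only \cref{lem:pgoodpair}, which costs a factor $2^{m+1}$ per tree level (additive $O(m)$ in the log, with no Erd\H{o}s--Szekeres step), then colours each node with the vector in $\set{0,1}^{\cP}$ recording which way its pair is ordered under each permutation ($2^m$ colours), and invokes a Ramsey lemma for vertex-coloured binary trees (\cref{lem:monotree}): a complete tree of height $O(h^2 m)$ contains a subdivision of a complete tree of height $h$ all of whose layers are monochromatic. Monochromatic layers are exactly your synchronisation, obtained by pigeonholing over the $\le 2^{m(h-1)}$ possible colourings of a height-$(h-1)$ subdivision rather than by transporting structure through a bijection. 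The total size requirement is $n\ge 2^{O(h^2m^2)}$, giving $m=\Omega(\sqrt{\log n})$. I'd suggest replacing your matched pair lemma by this build-then-pigeonhole step; the rest of your outline (one element per leaf, one bit per level, $2^h<t$) then goes through.
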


The proof of this result is inspired by the proof of \cref{thm:lblogn}. However, instead of only constructing ordered sets in $A$, we recursively construct ordered sets both in $A$ and in $B$. We then use a Ramsey-theoretic argument about vertex-coloured binary trees to find a subset of size $k$ that is only $2^{\ceil{\log k}}$-shattered.

We note that the lower bound on $t$ in \cref{thm:lbsqrtlogn} cannot be replaced by anything smaller. Indeed, a careful analysis of the construction of Hajnal and Spencer~\cite{spencer1972minimal} shows that $f_k(n,t) = \Theta(\log \log n)$ for $3 \le t \le 2^{\ceil{\log_2 k}}$. We provide an equivalent construction which proves this and which serves as a motivating example for what follows.

\begin{theorem}\label{thm:ubloglogn}
    Let $k \ge 3$ and $t \le 2^{\ceil{\log_2 k}}$. Then, $f_k(n,t) = \cO(\log \log n)$.
\end{theorem}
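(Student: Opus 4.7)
Set $m = \lceil \log_2 k \rceil$. The plan is to construct a family of $r = O(\log\log n)$ permutations of $[n]$ which $2^m$-shatters every $k$-subset. I would use an iterative construction closely following Hajnal--Spencer: start from a base family $\cF_0$ consisting of all $k!$ permutations of $[n_0]$ with $n_0 = k$, which trivially $2^m$-shatters the unique $k$-subset; at each subsequent step, identify $[n_{r+1}]$ with $[n_r] \times [n_r]$ (so $n_{r+1} = n_r^2$) and form $\cF_{r+1}$ by lifting each $\pi \in \cF_r$ to the permutation $\tilde\pi$ of $[n_r]^2$ that sorts pairs $(a, b)$ by $(\pi(a), \pi(b))$ lexicographically, together with a further constant $c = c(k)$ number of mixing permutations (e.g.\ sorting by $(b, a)$ lexicographically under various $\pi$'s). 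Since $c$ depends only on $k$, we obtain $|\cF_r| = O(r)$ and $n_r = k^{2^r}$, so $r = O(\log\log n)$ steps suffice to reach $n_r \ge n$.

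The core of the proof would then be to inductively verify that $\cF_r$ $2^m$-shatters every $k$-subset. I would maintain the stronger hypothesis that $\cF_r$ $2^{\lceil \log_2 s \rceil}$-shatters every $s$-subset of $[n_r]$, for every $3 \le s \le k$ (with each 2-subset 2-shattered by the monotone permutations present in $\cF_0$). Given a $k$-subset $X = \{(a_j, b_j)\}_{j=1}^k \subseteq [n_r]^2$, I would split on its projection to the first coordinate. When the $a_j$'s are pairwise distinct, the ordering of $X$ under $\tilde\pi$ coincides with the ordering of $\{a_j\}$ under $\pi$, so the inductive hypothesis immediately yields $2^m$ distinct orderings on $X$; when all $a_j$'s coincide, the $b_j$'s are pairwise distinct and the argument is symmetric, using that $\tilde\pi$ degenerates to sorting by $\pi(b)$.

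The hard case will be when the $a_j$'s take $s'$ distinct values with $1 < s' < k$. Here $X$ decomposes into $s'$ blocks, and the ordering under $\tilde\pi$ combines a \emph{macro-ordering} of the blocks (by $\pi$ on the $s'$ block representatives) with a \emph{micro-ordering} within each block (by $\pi$ on the corresponding $b$-values), both controlled by the same $\pi$. Applying the inductive hypothesis to the macro-set and to each micro-set separately gives orderings whose joint realisation must produce $2^m$ distinct orderings on $X$; a naive product bound loses a constant factor due to correlations between the macro- and micro-sortings across $\pi \in \cF_r$. To overcome this, I would strengthen the inductive hypothesis to a \emph{joint shattering} property asserting that $\cF_r$ realises all admissible combinations of orderings on disjoint sub-subsets of $[n_r]$, and the additional $c(k)$ mixing permutations added at each step would be chosen precisely to preserve this joint property under the lifting operation. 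Verifying this preservation, and in particular checking that the projections of disjoint subsets of $[n_r]^2$ can be handled despite possibly overlapping on $[n_r]$, will be the main technical obstacle.
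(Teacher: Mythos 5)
Your overall architecture (recursive squaring, lexicographic lifts, $\cO(1)$ extra permutations per level, $\cO(\log\log n)$ levels) is in the spirit of Hajnal--Spencer and is not hopeless, but the proof has a genuine gap exactly where you flag it, and the fix you sketch does not work as stated. The ``joint shattering'' hypothesis you propose --- that $\cF_r$ realises \emph{all} admissible combinations of orderings on disjoint sub-subsets of $[n_r]$ --- is too strong to maintain: already for a single subset of size $k$ it would amount to fully shattering every $k$-subset of $[n_r]$, which forces $\abs{\cF_r} = \Omega(\log n_r) = \Omega(2^r)$ and destroys the $\cO(\log\log n)$ bound. Moreover, in your formulation the macro-set (the $s'$ distinct $a$-values) and the micro-sets (the $b$-values inside each block) live in the same copy of $[n_r]$ and may intersect, so some combinations of orderings are outright unrealisable by a single $\pi$; the overlap issue you mention in your last sentence is not a technicality but a structural obstruction to the hypothesis you want to induct on. What is missing is the identification of the correct joint property: one weak enough to be achievable with $\cO(\log\log n)$ permutations in total, yet strong enough to force $2^{\lceil \log_2 k\rceil}$ orderings in the mixed case.

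The paper resolves this by flattening the recursion: identify $[n]$ with $[2]^d$ for $d = \lceil\log_2 n\rceil$ and use lex-permutations, each determined by a choice of order $\rho_i$ on the constant-size alphabet in each coordinate $i$. The decomposition of a $k$-set $X$ then involves at most $k-1$ relevant coordinates (the positions where consecutive elements of $X$ first differ), and these coordinates are disjoint by construction. The joint requirement becomes: for every choice of at most $k$ coordinates and every assignment of orders to size-$\le k$ slices there, some permutation in the family realises all of them simultaneously ($k$-lex-shattering). This is obtained from a single application of the known $\Theta(\log N)$ full-shattering result on the tiny ground set $[bd]$ of size $2\lceil\log_2 n\rceil$ (shifting coordinate $i$ to $(i-1)b+[b]$ disjointifies the overlapping alphabets), costing $\cO(\log\log n)$ permutations in total rather than $\cO(1)$ per level. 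A counting argument then shows that realising all order patterns on a suitable chain of slices whose sizes multiply to at least $k$ already yields at least $2^{\lceil\log_2 k\rceil}$ induced orderings. If you wish to keep your recursive scheme, you would need to replace your joint hypothesis with this coordinate-wise independence property and verify it is preserved under the lift; as written, the proposal does not constitute a proof.
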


Our construction identifies $[n]$ with $[2]^d$ for $d = \log_2 n$. Then, we consider lexicographic permutations of $[2]^d$ which are premutations where the order of $x, y \in [2]^d$ only depends on the values $x_i$ and $y_i$ for the first position $i$ with $x_i \neq y_i$. We show that an appropriate choice of these permutations ensures that all subsets of size $k$ are $2^{\ceil{\log_2 k}}$-shattered.

Finally, for $k \ge 4$, we show that there exist $t > 2^{\ceil{\log_2 k}}$ with $f_k(n,t) = \cO(\sqrt{\log n})$. Together with \cref{thm:lbsqrtlogn}, this establishes the existence of a fourth regime for $f_k(n,t)$.

\begin{theorem}\label{thm:ubsqrtlogn}
    Let $k \ge 4$ and $t \le \min\set{2 k, 2^{\ceil{\log_2 k}} + 4}$. Then, $f_k(n,t) = \cO(\sqrt{\log n})$.
\end{theorem}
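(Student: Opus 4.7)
The plan is to extend the lexicographic construction of \cref{thm:ubloglogn} to a product structure that takes advantage of the larger $\Theta(\sqrt{\log n})$ budget. Identify $[n]$ with $A^r$, where $r$ and $|A|$ are chosen so that $|A|^r = n$ and the construction below yields a family of size $\Theta(\sqrt{\log n})$ (for instance, $r = \sqrt{\log n}/\log\log n$ and $|A| = 2^{\sqrt{\log n}\log\log n}$). The permutations are built by combining a Hajnal--Spencer family $\cP_0$ of $\mathcal{O}(\log\log|A|)$ permutations of $A$ (provided by \cref{thm:ubloglogn} applied to $A$) with coordinate-wise sorting on $A^r$: for each coordinate $i \in [r]$ and each $\pi \in \cP_0$, one permutation of $A^r$ sorts primarily by the $i$-th coordinate using $\pi$, with ties broken cyclically by subsequent coordinates.

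For a $k$-subset $X \subseteq A^r$, I expect the shattering analysis to proceed by case analysis on the coordinate structure of $X$. If some coordinate $i$ has $k$ distinct values on $X$, then the $t$-shattering property of $\cP_0$ applies directly to the coord-$i$ projection, yielding at least $2^{\lceil \log_2 k \rceil}$ orderings of $X$. The additional orderings needed to reach $t \le \min\{2k, 2^{\lceil \log_2 k \rceil} + 4\}$ should come from combining contributions across different coordinates $i$, whose tiebreak patterns vary; the specific thresholds likely reflect that either each element of $X$ can be placed first or last via an appropriate coordinate (giving the $2k$ bound), or that a constant number of extra in-order traversals of the trie structure from \cref{thm:ubloglogn} can be added on top of the base $2^{\lceil \log_2 k \rceil}$ orderings (giving the $+4$ bound).

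The hard part will be handling pathological $k$-subsets $X$ for which every coordinate has fewer than $k$ distinct values on $X$ --- for example, subsets contained in a low-dimensional Boolean subcube of $A^r$. There the direct application of $\cP_0$ breaks down, and one must argue that the $r = \Theta(\sqrt{\log n})$ coordinate-first permutations together with the Hajnal--Spencer diversity of $\cP_0$ still produce enough distinct tiebreak orderings on $X$. This likely requires a subtle combinatorial argument exploiting both the diversity of trie shapes (as $i$ varies) and the diversity of traversals of each trie (as $\pi \in \cP_0$ varies), and it is precisely where the construction must be tuned so that the orderings accumulate to at least $t$, matching the lower bound of \cref{thm:lbsqrtlogn}.
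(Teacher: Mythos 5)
Your proposal correctly identifies the right scale of construction (a product/lexicographic structure over $\Theta(\sqrt{\log n})$ coordinates, with coordinate-first permutations layered on top), but it stops at exactly the point where the proof actually has to happen, and the base family you chose is too weak to support the argument. The paper takes $[n] = [b]^d$ with $b = 2^d$, $d = \lceil\sqrt{\log_2 n}\rceil$, and crucially uses a family of lex-permutations that is \emph{$k$-lex-shattering}: for any $\le k$ coordinates and any prescribed permutations on $\le k$-element subsets of the alphabet in each of those coordinates, some single member of the family realises all prescriptions simultaneously. This is obtained by fully shattering all $k^2$-subsets of $[bd]$, which costs $\cO(\log(bd)) = \cO(\sqrt{\log n})$ permutations precisely because the alphabet is as large as $2^{\sqrt{\log n}}$. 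Your proposal instead uses a Hajnal--Spencer family $\cP_0$ on each coordinate alphabet. That family only guarantees $2^{\lceil\log_2 k\rceil}$ induced orders on each $k$-subset of $A$; it does not let you prescribe slice permutations at all, and certainly not simultaneously across several coordinates. Without that, even the baseline count of $2^{\lceil\log_2 k\rceil}$ induced permutations on a set $X$ whose elements are separated across several coordinates (the product-of-factorials bound $\prod_i \abs{\slice{Y_i}}! \ge 2^{\lceil\log_2 k\rceil}$ in the paper's Lemma~\ref{lem:lexshatter2h}) is not established. Even in your ``easy'' case where one coordinate takes $k$ distinct values on $X$, projecting onto that coordinate only yields $2^{\lceil\log_2 k\rceil}$ orders from $\cP_0$, which is still short of $t$.

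The genuine gap is the pathological case, which you explicitly defer (``this likely requires a subtle combinatorial argument \dots where the construction must be tuned''). The paper resolves it with a concrete structural dichotomy: if the $k$-lex-shattering family does not already $2k$-shatter $X$, then every pairwise-difference position of $X$ has a slice of size exactly $2$, there are exactly $\lceil\log_2 k\rceil$ such positions, and slices at the same position coincide. From this one extracts two consecutive pairs $\set{x_\ell,x_{\ell+1}}$ and $\set{x_m,x_{m+1}}$ that first differ at the \emph{same} position $i$, shows the lex-family keeps these pairs in blocks while the four coordinate-$i$-first permutations $\pi_{i,\sigma,\tau}$ interleave them, and thereby gains exactly $4$ new induced permutations, reaching $2^{\lceil\log_2 k\rceil}+4$. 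Nothing in your proposal supplies this structural classification or the mechanism producing the extra orders, so as written the argument does not close.
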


This result is proved similar to \cref{thm:ubloglogn}, but we identify $[n]$ with $[2^d]^d$ for $d = \sqrt{\log_2 n}$. Then, most subsets of size $k$ are $2k$-shattered by lexicographic permutations, and the remaining subsets have a very specific structure. We exploit this structure and add a few more permutations which ensure that all subsets of size $k$ are $t$-shattered.

We remark that partial shattering with permutations is quite different to partial shattering with sets. A family $\cF$ of subsets of $[n]$ \defn{shatters} a set $X \subseteq [n]$ if for every subset $Y \subseteq X$ there exists $F \in \cF$ with $F \cap X = Y$, and $\cF$ \defn{$t$-shatters} $X$ if for at least $t$ distinct subsets $Y \subseteq X$ there exists $F \in \cF$ with $F \cap X = Y$. The study of shattering families of sets dates back to the seminal works of Sauer~\cite{sauer1972density}, Shelah~\cite{shelah1972combinatorial}, and Vapnik and Chervonenkis~\cite{vapnik2015uniform}.

As for permutations, Kleitman and Spencer~\cite{kleitman1973} showed that the minimum number of subsets of $[n]$ that shatter all subsets of size $k$ is $\Theta(\log n)$. However, in the case of partial shattering, the family $\set{\emptyset, [n]}$ 2-shatters all subsets of size $k$, and every family that 3-shatters all subsets of size $k$ already needs $\Omega(\log n)$ sets.\footnote{If $n \ge k \cdot 2^{\abs{\cF}}$, one of the intersections $\bigcap_{F \in \cF} A_F$ with $A_F \in \set{F, [n] \setminus F}$ must have size at least $k$, and any subset $X \subseteq \bigcap_{F \in \cF} A_F$ of size $k$ satisfies $F \cap X \in \set{\emptyset, X}$ for all $F \in \cF$.} Therefore, partial shattering with sets only has the two regimes $\Theta(1)$ and $\Theta(\log n)$.

The rest of the paper is structured as follows. In \cref{sec:lowerbounds} we prove the lower bounds of \cref{thm:lblogn,thm:lbsqrtlogn}. Afterwards, in \cref{sec:upperbounds}, we prove the upper bounds of \cref{thm:ubloglogn,thm:ubsqrtlogn}. We finish with some open problems in \cref{sec:openproblems}.

\textbf{Notation.} Throughout the paper, a permutation $\rho$ of a set $X$ is a total order of the elements of $X$. We denote this order by $<_\rho$. Note that if $Y \subseteq X$, then $\rho$ induces a permutation on $Y$. If $\rho$ is a permutation of $A \cup B$, we write $A <_\rho B$ if for all $a \in A$ and $b \in B$ we have $a <_\rho b$.

\section{Lower bounds}
\label{sec:lowerbounds}

To prove lower bounds for $f_k(n,t)$, we first need to define the concept of ordered sets. Let $\cP$ be a family of permutations of a set $X$. Then, we say that a pair of disjoint subsets $A,B\subseteq X$ is \defn{$\cP$-ordered} if for each permutation $\rho \in \cP$ either $A <_\rho B$ or $B <_\rho A$. The following result shows that any set contains large $\cP$-ordered subsets.

\begin{lemma}\label{lem:pgoodpair}
    Let $X$ be a set and let $\cP$ be a family of $m$ permutations of $X$.
    Then, there exists a $\cP$-ordered pair $A,B\subseteq X$ with $\min\set{\abs{A}, \abs{B}} \ge \floor{\abs{X} / 2^{m+1}}$.
\end{lemma}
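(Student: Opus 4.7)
The plan is to proceed by induction on $m$. The base case $m = 0$ is immediate: partitioning $X$ into two disjoint halves of size $\floor{\abs{X}/2}$ yields a pair that is vacuously $\cP$-ordered, matching the desired bound $\floor{\abs{X}/2^{0+1}}$.

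For the inductive step, fix any $\rho \in \cP$ and apply the induction hypothesis to $\cP' = \cP \setminus \set{\rho}$, a family of $m-1$ permutations. This provides a $\cP'$-ordered pair $A', B' \subseteq X$ with $\min\set{\abs{A'}, \abs{B'}} \ge \floor{\abs{X}/2^m}$. It then suffices to refine $A'$ and $B'$ into subsets that are additionally $\rho$-ordered, at the cost of one more factor of $2$ in size.

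The key step is to split $A'$ and $B'$ each into their lower and upper halves under $\rho$. Let $A'_-$ and $A'_+$ consist of the bottom and top $\floor{\abs{A'}/2}$ elements of $A'$ under $\rho$ respectively (so $A'_- <_\rho A'_+$), and define $B'_-, B'_+ \subseteq B'$ analogously. Write $a$ for the largest element of $A'_-$ under $\rho$ and $b$ for the smallest element of $B'_+$. If $a <_\rho b$, then $A'_- <_\rho B'_+$, so the pair $(A'_-, B'_+)$ is $\rho$-ordered; being a pair of subsets of the $\cP'$-ordered $(A', B')$, it is also $\cP'$-ordered, hence $\cP$-ordered. Otherwise $b <_\rho a$ (using $a \ne b$ since $A' \cap B' = \emptyset$), and the chain $B'_- <_\rho b <_\rho a <_\rho A'_+$ yields $B'_- <_\rho A'_+$, so the mirror pair $(B'_-, A'_+)$ works. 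Either way, both members of the chosen pair have size $\floor{\abs{A'}/2} \ge \floor{\abs{X}/2^{m+1}}$, completing the induction.

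The main obstacle is choosing the right halves so that $\rho$-orderedness is achieved without spoiling the $\cP'$-orderedness already in hand. A tempting alternative --- recursively bisecting $X$ along each permutation in turn and then keeping only the ``all-left'' and ``all-right'' corner cells --- fails in general, since two sibling halves of a parent set need not remain comparable under later permutations that may interleave them. The case analysis above sidesteps this cleanly by only halving inside $A'$ and $B'$ (rather than inside $X$) when incorporating $\rho$.
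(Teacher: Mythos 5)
Your proof is correct and follows essentially the same inductive strategy as the paper: peel off one permutation at a time, refining the current $\cP'$-ordered pair and losing a factor of $2$ per step. The only (cosmetic) difference is in the halving step --- the paper takes the top $\ell$ elements of $A' \cup B'$ under the new permutation and intersects with $A'$ and $B'$, whereas you halve $A'$ and $B'$ separately and compare the two boundary elements; both give the same bound.
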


\begin{proof}
    Let $\cP = \set{\rho_1, \rho_2, \dotsc, \rho_m}$ and define $\cP_i = \set{\rho_1, \rho_2, \dots, \rho_i}$. We inductively construct a $\cP_i$-ordered pair $A_i,B_i \subseteq X$ with $\abs{A_i} = \abs{B_i} \ge \floor{\abs{X} / 2^{i+1}}$. For $i = 0$, we choose $A_0, B_0 \subseteq X$ as two disjoint subsets of size $\floor{\abs{X} / 2}$. 
    
    Now suppose we have constructed $A_i$ and $B_i$. Let $\ell = \abs{A_i} = \abs{B_i} \ge \floor{\abs{X} / 2^{i+1}}$ and consider the subset $L$ of the $\ell$ largest elements of $A_i \cup B_i$ under $<_{\rho_{i+1}}$. Note that either $\abs{L \cap A_i} \ge \ell / 2$ or $\abs{L \cap B_i} \ge \ell / 2$. Without loss of generality, assume that $\abs{L \cap A_i} \ge \ell / 2$.
    
    Set $A_{i+1} = L \cap A_i$ and $B_{i+1} = B_i \setminus L$. Since $b <_{\rho_{i+1}} a$ for all $a \in A_{i+1}$ and $b \in B_{i+1}$, the pair $(A_{i+1},B_{i+1})$ is $\cP_{i+1}$-ordered. We also have $\abs{A_{i+1}} \ge \ell / 2 \ge \floor{\abs{X} / 2^{i+2}}$ and $\abs{B_{i+1}} = \ell - \abs{L \cap B_i} = \ell - (\ell - \abs{L \cap A_i}) = \abs{A_{i+1}}$, as required.
    
    For $i = m$, the pair $(A_m, B_m)$ is $\cP$-ordered with $\min\set{\abs{A_m}, \abs{B_m}} \ge \floor{\abs{X} / 2^{m+1}}$.
\end{proof}

Using $\cP$-ordered pairs, we can prove \cref{thm:lblogn}. For this, we construct a sequence of nested $\cP$-ordered pairs $A_i, B_i \subseteq A_{i-1}$ for $i \in [k]$, where $A_0 = [n]$. Then, if we pick an element $x_i \in B_i$ for each $i \in [k]$, the fact that all pairs are $\cP$-ordered implies that $\set{x_1, \dots, x_k}$ is only $2^{k-1}$-shattered by $\cP$, as required.

\begin{proof}[Proof of \cref{thm:lblogn}]
    Let $t > 2^{k-1}$. We claim that $f_k(n,t) > (\log_2 n) / k - 1$. Indeed, suppose that $\cP$ is a family of $m$ permutations of $[n]$ with $m \le (\log_2 n) / k - 1$, and so $n \ge 2^{k (m+1)}$. We show that there is a subset of size $k$ that is not $t$-shattered by $\cP$.
    
    To do so, we inductively construct a sequence of $\cP$-ordered pairs as follows. Start with $A_0 = [n]$. Then, for $i = 1, \dots, k$, apply \cref{lem:pgoodpair} to $A_{i-1}$ to obtain a $\cP$-ordered pair $A_i, B_i \subseteq A_{i-1}$ with $\min\set{\abs{A_i}, \abs{B_i}} \ge \floor{\abs{A_{i-1}} / 2^{m+1}}$. Note that this implies that $\min\set{\abs{A_i}, \abs{B_i}} \ge \floor{n / 2^{i (m+1)}} \ge \floor{n / 2^{k (m+1)}} \ge 1$, and so $A_i$ and $B_i$ are non-empty.

    For each $i \in [k]$, pick $x_i \in B_i$. We claim that $\cP$ does not $t$-shatter the set $\set{x_1,\dotsc, x_k}$. Indeed, let $i \in [k-1]$. Note that $x_j \in A_i$ for all $j > i$. Since $(A_i,B_i)$ is $\cP$-ordered, it follows that for each permutation $\rho \in \cP$ we either have $x_i <_\rho \set{x_{i+1}, \dots, x_k}$ or $\set{x_{i+1}, \dots, x_k} <_\rho x_i$. This provides two choices for the position of $x_i$ relative to $\set{x_{i+1}, \dots, x_k}$. Moreover, given such a choice for each $i\in [k-1]$, this uniquely determines the permutation that $\rho$ induces on $\set{x_1,\dots,x_k}$. Hence, $\cP$ induces at most $2^{k-1}$ permutations on $\set{x_1,\dots,x_k}$, and therefore does not $t$-shatter $\set{x_1,\dots,x_k}$.
\end{proof}

To prove \cref{thm:lbsqrtlogn}, we will construct a complete binary tree of nested $\cP$-ordered pairs. Namely, for each $\cP$-ordered pair $(A,B)$ that we obtain, we recursively construct new $\cP$-ordered pairs in each of $A$ and $B$.

This is useful for the following reason. Suppose that $k = 4$, $(A,B)$ is a $\cP$-ordered pair, and two $\cP$-ordered pairs $A',B' \subseteq A$ and $A'',B'' \subseteq B$ in the tree are \defn{synchronised} in the sense that for all $\rho \in \cP$ we have $A' <_\rho B'$ if and only if $A'' <_\rho B''$. Pick $x_1 \in A'$, $x_2 \in B'$, $x_3 \in A''$, and $x_4 \in B''$. Then, since $(A,B)$ is $\cP$-ordered, we have $\set{x_1, x_2} <_\rho \set{x_3, x_4}$ or $\set{x_3, x_4} <_\rho \set{x_1, x_2}$ for all $\rho \in \cP$, and since $(A',B')$ and $(A'',B'')$ are synchronised, we have $x_1 <_\rho x_2$ if and only if $x_3 <_\rho x_4$. This implies that $\cP$ induces at most 4 permutations on $\set{x_1, x_2, x_3, x_4}$ which is less than $2^{k-1} = 8$.

To find synchronised $\cP$-ordered pairs, we use a Ramsey-theoretic argument about vertex-coloured binary trees. For that, we need to introduce some terminology. A \defn{binary tree} $T$ rooted at a vertex $r$ is a tree where $r$ has degree $0$ or $2$ and every other vertex has degree $1$ or $3$. A leaf of $T$ is a vertex of degree at most $1$. The \defn{layer} of $T$ at height $h$ is the set $N_h(r) \subseteq V(T)$ of those vertices at graph distance exactly $h$ from $r$. For a vertex $v \in N_h(r)$, its children are its neighbours in $N_{h+1}(r)$, and if $v \neq r$ then its parent is its neighbour in $N_{h-1}(r)$. Clearly, $\abs{N_h(r)} \le 2^h$. If there exists some $h$ such that $\abs{N_h(r)} = 2^h$ and $\abs{N_{h+1}(r)} = 0$, then $T$ is a \defn{complete binary tree} of height $h$.

Let $S$ be a second binary tree with root $q$. A \defn{subdivision} of $S$ in $T$ is an injective map $\varphi\colon V(S) \to V(T)$ such that for all vertices $v \in V(S) \setminus \set{q}$ with parent $p$, $\varphi(v)$ is contained in the subtree of $T$ rooted at $\varphi(p)$. If the vertices of $T$ are coloured (not necessarily properly), we say that the layers of the subdivision are \defn{monochromatic} if $\varphi(N_h(q))$ is monochromatic for all $h$. We often identify $V(S)$ with $\varphi(V(S))$.

In our proof, $T$ will be a complete binary tree whose vertices correspond to nested $\cP$-ordered pairs, and we colour the vertices of $T$ in such a way that monochromatic layers correspond to collections of synchronised $\cP$-ordered pairs. Then, if we find a subdivision of a complete binary tree of height $\ceil{\log_2 k}$ in $T$ with monochromatic layers, we can pick one element from each of its leaves to obtain a set of size $k$ that is only $2^{\ceil{\log_2 k}}$-shattered by $\cP$, as required.

To do this, we need to find such subdivisions in large vertex-coloured complete binary trees. For all integers $c \ge 1$ and $h \ge 0$, let $g(c,h)$ be the smallest integer such that every complete binary tree of height $g(c,h)$ whose vertices are coloured with $c$ colours contains a subdivision of a complete binary tree of height $h$ with monochromatic layers. We prove the following upper bound on $g(c,h)$.

\begin{lemma}\label{lem:monotree}
    For all integers $c \ge 1$ and $h \ge 0$ we have $g(c,h) \le h \ceil{\log_2(c^{h-1} + 1)}$.
\end{lemma}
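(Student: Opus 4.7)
My plan is to prove the bound by induction on $h$. The base case $h=0$ is immediate: the bound $g(c,0)\le 0$ holds because a single vertex is already a subdivision of the height-$0$ complete binary tree, whose unique layer is trivially monochromatic.

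For the inductive step, set $L:=\lceil\log_2(c^{h-1}+1)\rceil$ and let $T$ be an arbitrary $c$-coloured complete binary tree of height $hL$. I would split $T$ into a ``top'' subtree of height $L$ rooted at the root of $T$, and the $2^L\ge c^{h-1}+1$ ``bottom'' subtrees of height $(h-1)L$ rooted at the depth-$L$ vertices. Since $c^{h-1}\ge c^{h-2}$, we have $L\ge \lceil\log_2(c^{h-2}+1)\rceil$, and so each bottom subtree has height at least $g(c,h-1)$ by the inductive hypothesis. Consequently every bottom subtree contains a monochromatic subdivision of a height-$(h-1)$ complete binary tree.

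I would then label each depth-$L$ vertex $v$ with a ``colour signature'' recording the colour of $v$ together with the colour of each layer of a chosen monochromatic subdivision in its bottom subtree. If two depth-$L$ vertices $v_1,v_2$ admit matching signatures, their lowest common ancestor $r$ in $T$ lies strictly above depth $L$, placing $v_1$ and $v_2$ in disjoint subtrees of $r$. Taking $r$ as the root of the new subdivision and attaching the two matching sub-subdivisions rooted at $v_1,v_2$ produces a height-$h$ subdivision whose layers are monochromatic by construction: layer $1$ is $\{v_1,v_2\}$, which share the common layer-$0$ colour of the two sub-subdivisions, and for $j\ge 2$ layer $j$ is the union of the layer-$(j-1)$ vertices of the two sub-subdivisions, which agree in colour because the signatures match.

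The crux, and main obstacle, is guaranteeing the existence of such a matching pair of depth-$L$ vertices. A naive pigeonhole falls short: there are up to $c^h$ possible signatures against only $2^L\ge c^{h-1}+1$ vertices, a gap of roughly a factor $c$. To bridge it I would exploit a refined pigeonhole that takes advantage of (i) the tree structure of the depth-$L$ vertices, which sit as the leaves of the top height-$L$ subtree, and (ii) the flexibility available in the inductive step---each bottom subtree may simultaneously support many monochromatic subdivisions with different colour sequences, so that summed over all depth-$L$ vertices the total number of (vertex, signature) pairs exceeds $c^h$. A natural way to organise this is to first group the depth-$L$ vertices by their own colour, gaining a factor $c$ of concentration, and then match colour sequences for the remaining $h-1$ layers below using the same argument recursively. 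Calibrating this matching so that the bound $h\lceil\log_2(c^{h-1}+1)\rceil$ suffices at every inductive step is the technical heart of the proof.
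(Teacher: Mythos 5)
Your setup---induction on $h$, splitting $T$ into a top tree of height $L$ and the $2^L$ bottom subtrees, and merging two matching sub-subdivisions at their lowest common ancestor---is exactly the decomposition the paper uses, and your base case and merging step are fine. But the proposal is not a proof: the pigeonhole step is the entire content of the lemma, and you explicitly leave it open. Moreover, none of the devices you sketch for bridging the factor-$c$ gap can work, because the bound you are trying to reach fails already in the smallest case: for $h=1$ the claimed inequality reads $g(c,1)\le\ceil{\log_2 2}=1$, yet in a complete binary tree of height $1$ whose two leaves receive different colours, every subdivision of a height-$1$ complete binary tree must send the two children to the two leaves, so no such subdivision has monochromatic layers and $g(c,1)\ge 2$ for $c\ge 2$. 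Grouping the depth-$L$ vertices by their own colour gains nothing, since the root colour of a sub-subdivision is already one of the coordinates that must match (and $v$ need not even be the root of the sub-subdivision found inside its subtree).

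That said, your diagnosis of the obstacle is correct, and it in fact exposes an off-by-one in the paper's own proof: a monochromatic-layer subdivision of a complete binary tree of height $h-1$ has $h$ layers (heights $0$ through $h-1$), and all $h$ layer colours of $S_u$ and $S_v$ must agree for the merged object to have monochromatic layers (layer $1$ of the new subdivision is the pair of roots of $S_u$ and $S_v$), so the number of signatures is $c^{h}$, not the $c^{h-1}$ asserted there. The correct repair is not a refined pigeonhole but a slightly weaker bound: take $L_h=\ceil{\log_2(c^{h}+1)}$ at step $h$, so that $2^{L_h}\ge c^{h}+1$ and two sub-subdivisions with identical layer colourings exist by ordinary pigeonhole. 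This gives $g(c,h)\le\sum_{j=1}^{h}\ceil{\log_2(c^{j}+1)}\le h\ceil{\log_2(c^{h}+1)}=O(h^2\log_2(c+1))$, which is all that the application in \cref{thm:lbsqrtlogn} needs: there one requires $g(2^m,h)\le h^2(m+1)$, and indeed $\sum_{j=1}^{h}(mj+1)=mh(h+1)/2+h\le h^2(m+1)$.
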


\begin{proof}
    We proceed by induction on $h$. If $h = 0$, a complete binary tree of height $h$ is a single vertex, and so $g(c,0) = 0$.
    
    If $h \ge 1$, let $d = \ceil{\log_2(c^{h-1} + 1)}$ and let $T$ be a complete binary tree with root $r$ and height $d + g(c,h-1)$ that is coloured with $c$ colours. For each vertex $v \in N_d(r)$, the subtree of $T$ rooted at $v$ has height $g(c,h-1)$ and must therefore contain a subdivision $S_v$ of a complete binary tree of height $h-1$ with monochromatic layers. Since such subdivisions admit at most $c^{h-1}$ valid colourings and $\abs{N_d(r)} = 2^d \ge c^{h-1} + 1$, there must be two subdivisions $S_u$ and $S_v$ whose colourings coincide. Then, the subdivision formed by $S_u$, $S_v$, and the lowest common ancestor of $u$ and $v$ is a subdivision of a complete binary tree of height $h$ with monochromatic layers.

    Therefore, we get that
    \[
        g(c,h) \le d + g(c,h-1) \le d + (h-1) \ceil{\log_2(c^{h-2} + 1)} \le h \ceil{\log_2(c^{h-1} + 1)}. \qedhere
    \]
\end{proof}

Using the strategy outlined above, we now prove our lower bound for $t > 2^{\ceil{\log_2 k}}$.

\begin{proof}[Proof of \cref{thm:lbsqrtlogn}]
    Let $h = \ceil{\log_2 k}$ and $t > 2^h$. We claim $f_k(n,t) > \sqrt{\log_2 n} / h - 1$. Indeed, suppose that $\cP$ is a family of $m$ permutations of $[n]$ with $m \le \sqrt{\log_2 n} / h - 1$, and so using \cref{lem:monotree} we get $\log_2 n \ge h^2 (m+1)^2 \ge g(2^m, h) (m+1)$. We show that there is a subset of size $k$ that is not $t$-shattered by $\cP$.

    Consider a complete binary tree $T$ of height $g(2^m, h)$. We associate to each vertex $v$ of $T$ a set $X_v \subseteq [n]$ and a $\cP$-ordered pair $A_v, B_v \subseteq X_v$ as follows. For the root $r$ of $T$, let $X_r = [n]$ and let $A_r,B_r \subseteq X_r$ be a $\cP$-ordered pair given by \cref{lem:pgoodpair}. Then, for a vertex $v$ of $T$ with $\cP$-ordered pair $(A_v, B_v)$ and children $v_1$ and $v_2$, let $X_{v_1} = A_v$ and $X_{v_2} = B_v$, and let $A_{v_1}, B_{v_1} \subseteq X_{v_1}$ and $A_{v_2}, B_{v_2} \subseteq X_{v_2}$ be $\cP$-ordered pairs given by \cref{lem:pgoodpair}. This implies that $\abs{X_w} \ge \floor{n / 2^{g(2^m, h) (m+1)}} \ge 1$ for every $w \in V(T)$, and so $X_w$ is non-empty.
    
    Colour each vertex $v \in V(T)$ with a colour $c_v$ as follows. For $\rho \in \cP$, let
    \[
        c_v(\rho) = \begin{cases}
            1 & \text{if } A_v <_\rho B_v, \\
            0 & \text{if } B_v <_\rho A_v.
        \end{cases}
    \]
    Since $(A_v,B_v)$ is $\cP$-ordered, this is well-defined. This colours each vertex of $T$ with one of $2^m$ colours. By \cref{lem:monotree}, $T$ admits a subdivision of a complete binary tree $S$ of height $h$ with monochromatic layers. Let $q$ be the root of $S$.
    
    For each leaf $\ell$ of $S$, pick $y_\ell \in X_\ell$, and let $Y = \set{y_\ell : \ell \in N_h^S(q)}$ be the set of all of these elements. Note that $\abs{Y} = 2^h \ge k$. We claim that $Y$ is not $t$-shattered. Indeed, consider the colouring that a fixed permutation $\rho \in \cP$ induces on the first $h$ layers $N_{< h}^S(q)$ of $S$ by assigning $c_v(\rho)$ to each $v\in N_{< h}^S(q)$. Since each layer of $S$ is monochromatic, this colours $N_{< h}^S(q)$ with one of at most $2^h$ distinct colourings (accross all $\rho \in \cP$).
    
    Now, for any two distinct leaves $\ell, \ell' \in N_h^S(q)$, the lowest common ancestor $v \in N_{< h}^S(q)$ of $\ell$ and $\ell'$ satisfies $y_{\ell} \in A_v$ and $y_{\ell'} \in B_v$ (or vice versa). Then, if $c_v(\rho) = 1$, we have $A_v <_\rho B_v$ and so $y_{\ell} <_\rho y_{\ell'}$, and if $c_v(\rho) = 0$ we have $B_v <_\rho A_v$ and so $y_{\ell'} <_\rho y_{\ell}$. Thus, the colours $c_v(\rho)$ for the vertices $v \in N_{< h}^S(q)$ uniquely determine the permutation that $\rho$ induces on $Y$. Since there are at most $2^h$ distinct colourings of $N_{< h}^S(q)$, it follows that $\cP$ induces at most $2^h$ permutations on $Y$, and therefore does not $t$-shatter $Y$.
\end{proof}

\section{Upper bounds}
\label{sec:upperbounds}

To prove our upper bounds on $f_k(n,t)$, we use lexicographic permutations. Identify $[n]$ with a subset of $[b]^d$ where $b \ge 2$ and $d \ge \ceil{\log_b n}$ and define a \defn{lex-permutation} $\rho$ of $[b]^d$ to be a permutation that is constructed as follows. For each $i \in [d]$, pick a permutation $\rho_i$ of $[b]$. Then, for distinct $x, y \in [b]^d$, set $x <_\rho y$ if and only if $x_i <_{\rho_i} y_i$ where $i = \pos{x}{y} = \min\set{i \in [d] : x_i \neq y_i}$. That is, the relative order of $x$ and $y$ in $\rho$ is entirely determined by the first position where $x$ and $y$ differ. It is easy to check that $<_\rho$ is a total order, and so $\rho$ is a permutation.

We say that a family of permutations $\cP$ of $[b]^d$ is \defn{$k$-lex-shattering} if it is a family of lex-permutations such that the following holds. Let $\cI \subseteq [d]$ be a subset of size at most $k$, and for each $i \in \cI$ fix a permutation $\sigma_i$ of a subset $Y_i \subseteq [b]$ of size at most $k$. Then, there exists a permutation $\rho \in \cP$ such that $\rho_i$ induces $\sigma_i$ on $Y_i$ for all $i \in \cI$. We will later show that $k$-lex-shattering families $2^{\ceil{\log_2 k}}$-shatter every subset of size $k$. However, we first have to show that small $k$-lex-shattering families exist.

\begin{lemma}\label{lem:lexshattersize}
    For $k \ge 1$, there exists a $k$-lex-shattering family of $[b]^d$ with size $\cO(\log(b d))$.
\end{lemma}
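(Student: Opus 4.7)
The plan is a standard probabilistic method argument. I would sample $m$ lex-permutations $\rho^{(1)}, \dots, \rho^{(m)}$ independently, where for each $j \in [m]$ and $i \in [d]$ the coordinate permutation $\rho^{(j)}_i$ is drawn uniformly and independently from the symmetric group on $[b]$. The goal is to show that for $m = C_k \log(bd)$, with $C_k$ a constant depending only on $k$, the resulting family is $k$-lex-shattering with positive probability, which proves the existence of such a family of size $\cO(\log(bd))$.

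Call a \emph{constraint tuple} a choice of a subset $\cI \subseteq [d]$ with $\abs{\cI} \le k$ together with, for each $i \in \cI$, a subset $Y_i \subseteq [b]$ of size at most $k$ and a permutation $\sigma_i$ of $Y_i$. A crude count gives at most $C_k' (bd)^{k^2}$ constraint tuples for some constant $C_k'$ depending only on $k$: there are at most $(k+1) d^k$ choices for $\cI$, and at most $(k+1)!\, b^k$ choices of $(Y_i, \sigma_i)$ for each $i \in \cI$. For a fixed constraint tuple and a single random lex-permutation $\rho$, the probability that $\rho_i$ induces $\sigma_i$ on $Y_i$ is exactly $1/\abs{Y_i}! \ge 1/k!$, because among the $b!$ permutations of $[b]$ exactly $b!/\abs{Y_i}!$ induce any prescribed order on $Y_i$. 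Since the coordinate permutations are independent, $\rho$ satisfies the whole tuple with probability at least $(k!)^{-k}$, a positive constant depending only on $k$.

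Union-bounding over all constraint tuples, the probability that the random family fails to be $k$-lex-shattering is at most $C_k' (bd)^{k^2} \bigl(1 - (k!)^{-k}\bigr)^m$. Choosing $m = C_k \log(bd)$ for $C_k$ sufficiently large (as a function of $k$) drives this below $1$, so a $k$-lex-shattering family of size $\cO(\log(bd))$ exists. There is no real obstacle: the argument is a textbook union bound, and the only care required is to verify that the $k$-dependent quantities (the polynomial prefactor $C_k'$ and the success probability $(k!)^{-k}$) are absorbed into the $\cO$-notation and into the choice of $C_k$, which is automatic because $k$ is treated as a constant.
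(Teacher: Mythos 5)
Your argument is correct, but it takes a different route from the paper. The paper does not run a fresh probabilistic argument: it invokes \cref{thm:prevbounds} to obtain a family $\cQ$ of $\cO(\log(bd))$ permutations of $[bd]$ shattering all subsets of size $k^2$, identifies the $i$-th coordinate alphabet with the block $(i-1)b + [b]$, and reads off each lex-permutation $\rho^\tau$ coordinatewise from a single $\tau \in \cQ$; a constraint tuple $(\cI, (Y_i,\sigma_i)_{i\in\cI})$ then corresponds to a prescribed order on the set $\bigcup_{i\in\cI}((i-1)b+Y_i)$, which has size at most $k^2$ and is hence realised by some $\tau$. Your direct union bound over constraint tuples is essentially the probabilistic argument that underlies the black box the paper uses, specialised to the product structure: you gain independence across coordinates (success probability $(k!)^{-k}$ per random lex-permutation) at the cost of redoing the counting, whereas the paper's reduction is shorter and reuses a known theorem. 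Both yield $\cO(\log(bd))$, since the polynomial prefactor $(bd)^{O(k^2)}$ in your union bound only affects the constant. Your counting and probability computations check out (the probability that a uniform $\rho_i$ induces $\sigma_i$ on $Y_i$ is exactly $1/\abs{Y_i}!$ by symmetry, and the coordinates are independent by construction), so the proof is complete as written.
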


\begin{proof}
    By \cref{thm:prevbounds}, there is a family $\cQ$ of permutations of $[b d]$ with size $\cO(\log(b d))$ that shatters every subset of size $k^2$. For each $\tau \in \cQ$, construct a lex-permutation $\rho^\tau$ such that $\rho^\tau_i$ is the permutation that $\tau$ induces on $(i-1) b + [b] = \set{(i-1) b + 1, \dotsc, i b}$. We claim that $\cP = \set{\rho^\tau : \tau \in \cQ}$ is $k$-lex-shattering.

    Indeed, let $\cI \subseteq [d]$ have size at most $k$, and for every $i \in \cI$ let $\sigma_i$ be a permutation of a subset $Y_i \subseteq [b]$ of size at most $k$. Since $\cQ$ shatters the set $\bigcup_{i \in \cI} ((i-1) b + Y_i)$, there exists a permutation $\tau \in \cQ$ which induces $\sigma_i$ on $(i-1) b + Y_i$ for all $i \in \cI$. This implies that $\rho^\tau_i$ induces $\sigma_i$ on $Y_i$ for all $i \in \cI$.
\end{proof}

To determine the permutations that a $k$-lex-shattering family $\cP$ induces on a subset $X$, consider the lexicographic structure of $X$. Let $\spos{X} = \inf\set{\pos{x}{y} : x, y \in X}$ be the first position where two elements of $X$ differ, and let $\slice{X} = \set{x_{\spos{X}} : x \in X}$ denote the \defn{slice} of $X$ at that position if $\spos{X} < \infty$. This partitions $X$ into the subsets $X_s = \set{x \in X : x_{\spos{X}} = s}$ for $s \in \slice{X}$. Note that the sets $X_s$ are pairwise $\cP$-ordered and that their relative order in a permutation $\rho \in \cP$ is determined by the permutation that $\rho_{\spos{X}}$ induces on $\slice{X}$. We will denote this permutation by $\sliceperm{\rho}{X}$.

If we partition a set $X_s$ in the same way, this yields a partition of $X_s$ into $\cP$-ordered sets whose relative order is determined by $\sliceperm{\rho}{X_s}$. Continuing recursively like this, it follows that the order that $\rho$ induces on $X$ is determined by the permutations $\sliceperm{\rho}{Y}$ for $Y \subseteq X$. This is captured by the following lemma.

\begin{lemma}\label{lem:lexperminduced}
    Let $\rho$ be a lex-permutation of $[b]^d$ and let $X \subseteq [b]^d$. Then, the permutation that $\rho$ induces on $X$ is uniquely determined by the set of permutations $\sliceperm{\rho}{Y}$ for $Y \subseteq X$.
\end{lemma}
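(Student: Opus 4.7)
The plan is to proceed by induction on $|X|$. If $|X| \le 1$ there is nothing to determine, so the base case is immediate. For the inductive step, assume $|X| \ge 2$, so that $\spos{X} < \infty$. Write $i = \spos{X}$ and partition $X$ into the non-empty slices $X_s = \set{x \in X : x_i = s}$ for $s \in \slice{X}$. Each $X_s$ is a proper subset of $X$ (since $|\slice{X}| \ge 2$), so the inductive hypothesis applies to every $X_s$.

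The next step is to observe that the permutation induced by $\rho$ on $X$ splits naturally into two layers of information. First, because distinct slices $X_s$ and $X_{s'}$ differ at position $i$ in their defining coordinates, the lex-permutation $\rho$ compares any $x \in X_s$ with any $x' \in X_{s'}$ by comparing $s$ and $s'$ under $\rho_i$, i.e.\ by applying $\sliceperm{\rho}{X}$. In particular, the slices $X_s$ are pairwise $\cP$-ordered in the sense of Section~\ref{sec:lowerbounds}, and their relative order is entirely determined by the single permutation $\sliceperm{\rho}{X}$.

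Second, the order inside each slice $X_s$ is, by the inductive hypothesis, determined by the collection $\set{\sliceperm{\rho}{Z} : Z \subseteq X_s}$. Since every such $Z$ is also a subset of $X$, this data is available from the assumed collection $\set{\sliceperm{\rho}{Y} : Y \subseteq X}$. Combining the inter-slice order (given by $\sliceperm{\rho}{X}$) with the intra-slice orders (given by the inductive hypothesis applied to each $X_s$) completely reconstructs the permutation that $\rho$ induces on $X$, completing the induction.

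The argument is essentially a routine unfolding of the recursive structure of lex-permutations, so I do not expect any real obstacle; the only point to handle carefully is the check that the slices $X_s$ are indeed proper subsets of $X$ (so that the induction terminates) and that the data $\sliceperm{\rho}{Z}$ needed for the intra-slice step, with $Z \subseteq X_s$, lies within the originally supplied collection indexed by subsets of $X$.
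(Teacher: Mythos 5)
Your induction establishes only one direction of the lemma: that the collection $\set{\sliceperm{\rho}{Y} : Y \subseteq X}$ determines the permutation $\rho$ induces on $X$. That half is correct, and your slice-by-slice recursion is a valid (if slightly heavier) route; the paper gets the same implication without induction by observing that for any $x,y \in X$ the two-element set $Y = \set{x,y}$ has $\spos{Y} = \pos{x}{y}$, so $\sliceperm{\rho}{Y}$ already records whether $x <_\rho y$. The genuine gap is that the paper's proof also establishes, and the later application requires, the converse: if two lex-permutations $\rho$ and $\tau$ satisfy $\sliceperm{\rho}{Y} \neq \sliceperm{\tau}{Y}$ for some $Y \subseteq X$, then they induce \emph{different} permutations on $X$. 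This injectivity is precisely what is invoked in the proof of \cref{lem:product} (``each choice of the permutations $\sigma_i$ induces a different permutation on $X$'') to conclude that $X$ is $\bigl(\prod_{i}\abs{\slice{Y_i}}!\bigr)$-shattered; without it, distinct choices of slice data could a priori collapse to the same induced permutation and the count would fail.

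The missing direction is short but not vacuous, so you should add it: suppose $\sliceperm{\rho}{Y} \neq \sliceperm{\tau}{Y}$ for some $Y \subseteq X$ and let $i = \spos{Y}$. Since these are two distinct permutations of $\slice{Y}$, some pair $s, s' \in \slice{Y}$ is ordered oppositely by them; pick $x, y \in Y$ with $x_i = s$ and $y_i = s'$. By minimality of $i$ we have $\pos{x}{y} = i$, hence $x <_\rho y$ while $y <_\tau x$, so $\rho$ and $\tau$ induce different permutations on $X$. Alternatively, weaken the lemma statement to the one-directional claim you proved and supply this converse separately where \cref{lem:product} needs it.
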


\begin{proof}
    Let $x, y \in X$ and $i = \pos{x}{y}$. Then, $Y = \set{x, y} \subseteq X$ and $\sliceperm{\rho}{Y}$ is the permutation that $\rho_i$ induces on $\slice{Y} = \set{x_i, y_i}$. This implies that $x <_\rho y$ if and only if $x_i <_{\rho_i} y_i$ which in turn is equivalent to $x_i <_{\sliceperm{\rho}{Y}} y_i$. So, the permutation that $\rho$ induces on $X$ is determined by the set of permutations $\sliceperm{\rho}{Y}$ for $Y \subseteq X$.

    Conversely, suppose that $\rho$ and $\tau$ are lex-permutations with $\sliceperm{\rho}{Y} \neq \sliceperm{\tau}{Y}$ for some $Y \subseteq X$. Let $i = \spos{Y}$. Since $\sliceperm{\rho}{Y} \neq \sliceperm{\tau}{Y}$, there must exist $x, y \in Y$ with $\pos{x}{y} = i$ such that $x_i <_{\sliceperm{\rho}{Y}} y_i$ and $y_i <_{\sliceperm{\tau}{Y}} x_i$ (or vice versa). As above, this is equivalent to $x <_\rho y$ and $y <_\tau x$, and so $\rho$ and $\tau$ induce different permutations on $X$.
\end{proof}

Next, we deduce a lower bound on the number of permutations that $\cP$ induces on~$X$.
Let $\cI(X) = \set{\spos{Y} : Y \subseteq X \text{ and } \spos{Y} < \infty}$. We rely on the following oberservation.

\begin{lemma}\label{lem:product}
    Let $\cP$ be a $k$-lex-shattering family of permutations of $[b]^d$ and let $X \subseteq [b]^d$ be a subset of size $k$. For each $i \in \cI(X)$, let $Y_i \subseteq X$ be such that $\spos{Y_i} = i$. Then, $X$ is $(\prod_{i\in \cI(X)}\abs{S(Y_i)}!)$-shattered by $\cP$.
\end{lemma}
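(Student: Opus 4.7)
The plan is to combine the $k$-lex-shattering property of $\cP$ with \cref{lem:lexperminduced}: the latter tells us that the permutation which $\rho$ induces on $X$ is determined by the slice permutations $\sliceperm{\rho}{Y}$ for $Y \subseteq X$, while the former will let us prescribe the values $\sliceperm{\rho}{Y_i}$ for $i \in \cI(X)$ simultaneously and independently.

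First, I would check the numeric hypotheses needed to invoke the definition of $k$-lex-shattering. Since $\abs{X} = k$, each $Y_i \subseteq X$ satisfies $\abs{\slice{Y_i}} \le \abs{Y_i} \le k$. Moreover, the cardinality of $\cI(X)$ is at most the number of internal nodes of the lexicographic splitting tree on $X$, which has $k$ leaves and hence at most $k - 1 \le k$ internal nodes. Thus, for any tuple of permutations $(\sigma_i)_{i \in \cI(X)}$ where $\sigma_i$ is a permutation of $\slice{Y_i}$, applying the $k$-lex-shattering property to $\cI(X)$ and the sets $\slice{Y_i}$ yields some $\rho \in \cP$ such that $\rho_i$ induces $\sigma_i$ on $\slice{Y_i}$ — i.e.\ $\sliceperm{\rho}{Y_i} = \sigma_i$ — for every $i \in \cI(X)$.

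Next, I would argue that distinct tuples $(\sigma_i)_{i \in \cI(X)}$ give rise to distinct permutations induced on $X$. If $\rho, \tau \in \cP$ realise tuples that differ at some $i \in \cI(X)$, so that $\sliceperm{\rho}{Y_i} \neq \sliceperm{\tau}{Y_i}$, then the same reasoning as in the converse half of the proof of \cref{lem:lexperminduced} provides $x, y \in Y_i \subseteq X$ with $\pos{x}{y} = i$ such that $x <_\rho y$ and $y <_\tau x$, so $\rho$ and $\tau$ induce distinct permutations on $X$. Counting the $\prod_{i \in \cI(X)} \abs{\slice{Y_i}}!$ possible tuples then yields the required lower bound.

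I do not expect any real obstacle. The only slightly delicate point is verifying that the slice permutations at different positions may be chosen independently, which is precisely the content of the $k$-lex-shattering hypothesis; the injectivity from tuples to induced permutations on $X$ is immediate from the argument already used in the proof of \cref{lem:lexperminduced}.
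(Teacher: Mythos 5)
Your proposal is correct and follows essentially the same route as the paper: bound $\abs{\cI(X)}$ by $k-1$, invoke the $k$-lex-shattering property to realise every tuple $(\sigma_i)_{i\in\cI(X)}$, and use \cref{lem:lexperminduced} for injectivity. The only cosmetic difference is that you bound $\abs{\cI(X)}$ by counting internal nodes of the lexicographic splitting tree, whereas the paper sorts $X$ lexicographically and uses the identity $\pos{x}{z}=\min\set{\pos{x}{y},\pos{y}{z}}$; both are valid.
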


\begin{proof}
    We claim that $\abs{\cI(X)} < k$. Indeed, write $<$ for the standard lexicographic order on $[b]^d$, so $x < y$ if and only if $x_i < y_i$ where $i = \pos{x}{y}$. Note that if $x, y, z \in [b]^d$ satisfy $x < y < z$, then $i(x,z) = \min\set{i(x,y), i(y,z)}$. Therefore, if we write $X = \set{x_1, \dots, x_k}$ with $x_1 < \dots < x_k$, this implies that $\cI(X) = \set{i(x_\ell, x_{\ell+1}) : \ell \in [k-1]}$, and so $\abs{\cI(X)} \le k-1$ as claimed.

    Since $\cP$ is $k$-lex-shattering, it follows that if $\sigma_i$ is a permutation of $\slice{Y_i}$ for every $i \in \cI(X)$, then there is a permutation $\rho\in\cP$ with $\sliceperm{\rho}{Y_i} = \sigma_i$ for all $i \in \cI(X)$. By \cref{lem:lexperminduced}, each choice of the permutations $\sigma_i$ induces a different permutation on $X$, and so $X$ is $(\prod_{i \in \cI(X)} \abs{\slice{Y_i}}!)$-shattered by $\cP$.
\end{proof}

To maximise this product, consider again the decomposition of $X$ into its lexicographic structure from above. Suppose that we only pick the largest set $X_s$ whenever we recursively continue the decomposition, and let the sets obtained during this process be the sets $Y_i$. Then, we will show that $\prod_{i \in \cI} \abs{\slice{Y_i}}! \ge 2^{\ceil{\log_2 k}}$, and so every set of size $k$ is $2^{\ceil{\log_2 k}}$-shattered by $\cP$. This suffices to prove \cref{thm:ubloglogn}.

For \cref{thm:ubsqrtlogn}, we will additionally infer some structural information about $X$ whenever $X$ is not $2 k$-shattered by $\cP$. Later, this information will allow us to add more permutations to $\cP$ which ensure that $X$ is $t$-shattered for some $t > 2^{\ceil{\log_2 k}}$. Overall, we obtain the following lemma.

\begin{lemma}\label{lem:lexshatter2h}
    Let $\cP$ be a $k$-lex-shattering family of permutations of $[b]^d$, and let $h = \ceil{\log_2 k}$. Then, $\cP$ is a family that $2^h$-shatters all subsets of $[b]^d$ of size $k$. Moreover, if $X \subseteq [b]^d$ is a subset of size $k$ that $\cP$ does not $2k$-shatter, then the set $\cI(X)$ has size $h$, all $Y \subseteq X$ with $\spos{Y} < \infty$ satisfy $\abs{\slice{Y}} = 2$, and all $Y, Z \subseteq X$ with $\spos{Y} = \spos{Z} < \infty$ satisfy $\slice{Y} = \slice{Z}$.
\end{lemma}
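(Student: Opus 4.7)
My plan is to apply Lemma~\ref{lem:product} with two different choices of $Y_i$'s and supplement it with one direct use of the $k$-lex-shattering property at the end.

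For the $2^h$-shattering assertion, I would build a chain $X = X^{(0)} \supseteq \dots \supseteq X^{(s)}$ in which $X^{(j)}$ is a largest part of $X^{(j-1)}$ in its partition at position $i_j = \spos{X^{(j-1)}}$, and set $a_j = \abs{\slice{X^{(j-1)}}}$. Greediness forces $\prod_j a_j \ge \abs{X^{(0)}} = k$. Choosing $Y_{i_j} = X^{(j-1)}$ in Lemma~\ref{lem:product} (and any $Y_i$ with $\spos{Y_i} = i$ for the remaining $i \in \cI(X)$) then shows that $X$ is $\prod_j a_j!$-shattered. Combining $a! \ge 2^{a-1}$ with $a - 1 \ge \log_2 a$ for $a \ge 2$ and using the integrality of $\sum_j(a_j-1)$ upgrades this to $\prod_j a_j! \ge 2^{\ceil{\log_2 k}} = 2^h$.

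For the structural part, I assume $X$ is not $2k$-shattered and let $m_i = \max\set{\abs{\slice{Y}} : Y \subseteq X,\ \spos{Y} = i}$. Taking $Y_i$ to attain $m_i$ in Lemma~\ref{lem:product} gives $\prod_i m_i! < 2k$, while the greedy chain above implies $\prod_i m_i \ge \prod_j a_j \ge k$. Dividing the two bounds and using the identity $\log_2(m!) - \log_2 m = \log_2((m-1)!)$ produces $\sum_i \log_2((m_i-1)!) < 1$. Since this summand is $0$ for $m_i = 2$ and at least $1$ for $m_i \ge 3$, every $m_i$ must equal $2$; plugging back into $k \le \prod_i m_i = \prod_i m_i! < 2k$ then pins down $\abs{\cI(X)} = h$.

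For the final slice-equality condition, note that since $m_i = 2$, every $\slice{Y}$ coincides with the slice of its enclosing maximal ``bucket'', so it suffices to show that all maximal buckets at each position $i \in \cI(X)$ have a common slice. If instead two buckets $B, B' \subseteq X$ at some position $i$ satisfy $\slice{B} \ne \slice{B'}$, a quick case check on $\abs{\slice{B} \cup \slice{B'}} \in \set{3,4}$ shows that all four combinations of orders on $(\slice{B}, \slice{B'})$ are realised by some permutation of $W = \slice{B} \cup \slice{B'}$. Applying $k$-lex-shattering with $Y_i = W$, combined with the two orderings freely achievable on any size-$2$ slice at each of the remaining $h - 1$ positions, then yields $4 \cdot 2^{h-1} = 2^{h+1} \ge 2k$ distinct restriction tuples, which by Lemma~\ref{lem:lexperminduced} correspond to pairwise distinct permutations of $X$ induced by $\cP$---contradicting the assumption. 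I expect this last step to be the main obstacle: it requires the full force of the $k$-lex-shattering hypothesis to guarantee both that the four bucket-level orderings at position $i$ are simultaneously realisable and that they combine independently with the orderings chosen at the other positions in $\cI(X)$.
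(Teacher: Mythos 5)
Your proof is correct and follows essentially the same route as the paper's: a greedy chain giving $\prod_i \abs{\slice{Y_i}} \ge k$, an application of \cref{lem:product} to the maximal slices, and a final contradiction obtained by realising all four order-combinations on two distinct size-two slices at the same position together with free choices at the other $h-1$ positions. The only difference is cosmetic: where the paper splits into cases according to whether some slice has size at least $3$, you use $a! \ge 2^{a-1}$, $a-1 \ge \log_2 a$, and integrality to extract $2^h$ and to pin down $m_i = 2$ and $\abs{\cI(X)} = h$, which is a perfectly valid (and arguably tidier) way to do the same bookkeeping.
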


\begin{proof}
    Let $X \subseteq [b]^d$ be a subset of size $k$ and let $\cI = \cI(X)$.

    \begin{claim}\label{clm:largeyi}
        There exist sets $Y_i \subseteq X$ with $\spos{Y_i} = i$ for each $i \in \cI$ such that $\prod_{i \in \cI} \abs{\slice{Y_i}} \ge k$.
    \end{claim}

    \begin{poc}
        We prove by induction on $j$ that for all $0 \le j \le d$ there exist sets $Y_i \subseteq X$ with $\spos{Y_i} = i$ for each $i \in \cI \cap [j]$ and a subset $Z \subseteq X$ which satisfy $\spos{Z} > j$ and $\abs{Z} \cdot \prod_{i \in \cI \cap [j]} \abs{\slice{Y_i}} \ge k$. Applying this with $j = d$ proves the claim.
        
        For $j = 0$, let $Z = X$. Now suppose we have constructed such sets for $j - 1$. If $\spos{Z} > j$, the same sets work for $j$, where we choose any $Y_j\subseteq X$ with $i(Y_j)=j$ if $j\in \cI$.

        Otherwise, $\spos{Z} = j$. Let $Y_j = Z$, and for every $s \in \slice{Z}$ let $Z_s = \set{z \in Z : z_j = s}$. Pick $s \in \slice{Z}$ such that $\abs{Z_s} \ge \abs{Z} / \abs{\slice{Z}}$. Note that $\spos{Z_s} > j$ and $\abs{Z_s} \cdot \prod_{i \in \cI \cap [j]} \abs{\slice{Y_i}} \ge (\abs{Z} / \abs{\slice{Z}}) \cdot \prod_{i \in \cI \cap [j-1]} \abs{\slice{Y_i}} \cdot \abs{\slice{Z}} \ge k$. So we have constructed the sets for $j$.
    \end{poc}

    For all $i \in \cI$, let $Y_i \subseteq X$ be a set with $\spos{Y_i} = i$ that maximises $\abs{\slice{Y_i}}$. 
    By \cref{clm:largeyi}, $\prod_{i \in \cI} \abs{\slice{Y_i}} \ge k$, and by \cref{lem:product} $\cP$ induces at least $\prod_{i \in \cI} \abs{\slice{Y_i}}!$ permutations on $X$. If $\abs{\slice{Y_i}} \ge 3$ for some $i \in \cI$, then $\prod_{i \in \cI} \abs{\slice{Y_i}}! \ge 2 \prod_{i \in \cI} \abs{\slice{Y_i}} \ge 2 k \ge 2^h$. Otherwise, $\abs{\slice{Y_i}} = 2$ for all $i \in \cI$. Then, since $2^{\abs{\cI}} = \prod_{i \in \cI} \abs{\slice{Y_i}} \ge k$, we have $\abs{\cI} \ge h$ and so $\prod_{i \in \cI} \abs{\slice{Y_i}}! = 2^{\abs{\cI}} \ge 2^h$. This shows that $X$ is $2^h$-shattered.

    Moreover, if $X$ is not $2k$-shattered, these arguments imply that $\abs{\slice{Y_i}} = 2$ for all $i \in \cI$ and $\abs{\cI} = h$. Suppose that for some $j \in \cI$ there exists a set $Z \subseteq X$ with $\spos{Z} = j$ and $\slice{Y_j} \neq \slice{Z}$. Then, by the maximality of $Y_j$, $\abs{\slice{Z}} = 2$ and so $\abs{\slice{Y_j} \cap \slice{Z}} \le 1$. In particular, for any pair of permutations $\sigma$ of $\slice{Y_j}$ and $\tau$ of $\slice{Z}$, there is a permutation $\sigma_j$ of $\slice{Y_j} \cup \slice{Z}$ that induces $\sigma$ on $\slice{Y_j}$ and $\tau$ on $\slice{Z}$. By the same argument as in the proof of \cref{lem:product}, it follows that $\cP$ induces at least $\abs{\slice{Z}}!\cdot\prod_{i \in \cI} \abs{\slice{Y_i}}! \ge 2 k$ permutations on $X$. This contradicts the fact that $X$ is not $2k$-shattered.
\end{proof}

With $b = 2$ and $d = \ceil{\log_2 n}$, \cref{thm:ubloglogn} is an immediate consequence of \cref{lem:lexshattersize,lem:lexshatter2h}. We remark that while the description of our construction differs significantly from that of Spencer \cite{spencer1972minimal}, these constructions are essentially equivalent.

To prove \cref{thm:ubsqrtlogn}, start with a $k$-lex-shattering family $\cP$. By \cref{lem:lexshatter2h}, most subsets of size $k$ are $2k$-shattered by $\cP$, and even if a subset $X$ is not $2k$-shattered, it is nevertheless $2^{\ceil{\log_2 k}}$-shattered and it has a very specific structure. We exploit this structure by adding new permutations to $\cP$ which induce at least four additional permutation on $X$ and thereby ensure that $X$ is $(2^{\ceil{\log_2 k}} + 4)$-shattered.

To do so, we add a constant number of new permutations for each position $i \in [d]$. Let $<$ and $>$ denote the standard and reverse permutations of $[b]$.\footnote{That is, for $x,y\in [b]$ set $x <_< y$ if and only if $x < y$, and $x <_> y$ if and only if $x > y$.} Then, for all $\sigma, \tau \in \set{<,>}$, let $\pi_{i,\sigma,\tau}$ be the permutation of $[b]^d$ with $x <_{\pi_{i,\sigma,\tau}} y$ if and only if either $x_i <_\sigma y_i$, or $x_i = y_i$ and $x_{\pos{x}{y}} <_\tau y_{\pos{x}{y}}$. That is, $\pi_{i,\sigma,\tau}$ first sorts according to position $i$ and only afterwards behaves like a lex-permutation. Define
\[
    \cQ_i = \set{\pi_{i,\sigma,\tau} : \sigma, \tau \in \set{<,>}}.
\]
If $X$ is not $2k$-shattered, we show that for an appropriate $i \in [d]$ the permutations of $\cQ_i$ induce four additional permutations on $X$, as required.

\begin{proof}[Proof of \cref{thm:ubsqrtlogn}]
    Let $b = 2^d$, $d = \ceil{\sqrt{\log_2 n}}$, and $h = \ceil{\log_2 k}$. By \cref{lem:lexshattersize}, there exists a $k$-lex-shattering family $\cP$ of $[b]^d$ with size $\cO(\sqrt{\log n})$. Define $\cR = \cP \cup \bigcup_{i \in [d]} \cQ_i$ and note that $\cR$ has size $\cO(\sqrt{\log n})$.
    
    We claim that every subset $X \subseteq [b]^d$ of size $k$ is $\min\set{2k,2^h+4}$-shattered by $\cR$. Indeed, suppose that $X$ is not $2k$-shattered by $\cP$. Then, by \cref{lem:lexshatter2h}, we know that the set $\cI(X)$ has size $h$, all $Y \subseteq X$ with $\spos{Y} < \infty$ satisfy $\abs{\slice{Y}} = 2$, and all $Y, Z \subseteq X$ with $\spos{Y} = \spos{Z} < \infty$ satisfy $\slice{Y} = \slice{Z}$.

    Write $X = \set{x_1, \dots, x_k}$ with $x_1 < \dots < x_k$. Note that $\pos{x_\ell}{x_{\ell+1}} \in \cI(X)$ for all $\ell \in [k-1]$. Since $k \ge 4$, one can check that $k-1 > h = \abs{\cI(X)}$. So there must exist $1 \le \ell < m < k$ with $\pos{x_\ell}{x_{\ell+1}} = \pos{x_m}{x_{m+1}}$. Let $i = \pos{x_\ell}{x_{\ell+1}}$ and $j = \pos{x_{\ell+1}}{x_m}$.

    If $j \ge i$, then $\pos{x_{\ell+1}}{x_{m+1}} = i$ and $\pos{x_\ell}{x_{m+1}} = i$. So, $Y = \set{x_\ell, x_{\ell+1}, x_{m+1}} \subseteq X$ satisfies $\abs{\slice{Y}} = 3$. This contradicts the fact that all $Y \subseteq X$ with $\spos{Y} < \infty$ satisfy $\abs{\slice{Y}} = 2$. Therefore, $j < i$. In particular, $(x_\ell)_j = (x_{\ell+1})_j < (x_m)_j = (x_{m+1})_j$, and so $(\set{x_\ell, x_{\ell+1}}, \set{x_m, x_{m+1}})$ is $\cP$-ordered.

    Note that $\spos{\set{x_\ell, x_{\ell+1}}} = i = \spos{\set{x_m, x_{m+1}}}$. Since all $Y, Z \subseteq X$ with $\spos{Y} = \spos{Z} < \infty$ satisfy $\slice{Y} = \slice{Z}$, we must therefore have $(x_\ell)_i = (x_m)_i < (x_{\ell+1})_i = (x_{m+1})_i$, and so $(\set{x_\ell, x_m}, \set{x_{\ell+1}, x_{m+1}})$ is $\cQ_i$-ordered.

    In particular, $\cP$ and $\cQ_i$ induce different permutations on $X$. Moreover, note that $\set{x_\ell, x_{\ell+1}, x_m, x_{m+1}}$ is $4$-shattered by $\cQ_i$. Since \cref{lem:lexshatter2h} implies that $X$ is $2^h$-shattered by $\cP$, it follows that $X$ is $(2^h+4)$-shattered by $\cR$.
\end{proof}

\section{Open Problems}
\label{sec:openproblems}

In this paper, we have shown that at least four regimes exist for the asymptotic behaviour of $f_k(n,t)$ when $k \ge 4$, and we narrowed the range of values of $t$ for which the asymptotic behaviour is unknown. The main open problem is to determine the asymptotic behaviour of $f_k(n,t)$ for $k \ge 5$ and $\min\set{2k, 2^{\ceil{\log_2 k}} + 4} < t \le 2^{k-1}$. The following remains possible.

\begin{question}
    For all integers $k$ and $t$, is the asymptotic behaviour of $f_k(n,t)$ either $\Theta(1)$, $\Theta(\log \log n)$, $\Theta(\sqrt{\log n})$, or $\Theta(\log n)$?
\end{question}

We conjecture that \cref{thm:lblogn} determines the entire range of values of $t$ that satisfy $f_k(n,t) = \Theta(\log n)$.

\begin{conjecture}\label{conj:upologn}
    Let $k \ge 3$ and $t \le 2^{k-1}$. Then, $f_k(n,t) = o(\log n)$.
\end{conjecture}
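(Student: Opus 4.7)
The plan is to extend the lex-permutation construction of \cref{thm:ubsqrtlogn} to reach arbitrary $t \le 2^{k-1}$. For $k \ge 4$, set $J = k - 1 - \ceil{\log_2 k}$ and identify $[n]$ with $[b]^d$ where $d = \ceil{(\log_2 n)^{1/(J+1)}}$ and $b = 2^{\ceil{(\log_2 n)/d}}$. Start with a $k$-lex-shattering family $\cP$ of size $\cO(\log(bd)) = \cO((\log n)^{J/(J+1)})$ given by \cref{lem:lexshattersize}, which is $o(\log n)$ whenever $J \ge 1$, i.e., for $k \ge 4$ (the case $k = 3$ is already handled by \cref{thm:ubloglogn} since $2^{k-1} = 2^{\ceil{\log_2 k}}$ there). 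It remains to enrich $\cP$ with additional permutations so that every $k$-subset becomes $2^{k-1}$-shattered.

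The first step would be a refined structural lemma generalising \cref{lem:lexshatter2h}. The goal is to show that if a $k$-subset $X \subseteq [b]^d$ is not $2^{k-1}$-shattered by $\cP$, then the trie of $X$ is very constrained: $\abs{\cI(X)}$ is bounded in terms of $k$, every slice $\slice{Y}$ with $\spos{Y} < \infty$ has size close to $2$, and the slice labels exhibit strong alignment between sub-tries at equal heights, extending the last clause of \cref{lem:lexshatter2h} to multiple levels of the trie.

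Then, for each tuple $(i_1, \dotsc, i_j) \in [d]^j$ of distinct positions with $j \le J$, I would introduce a constant-size family $\cQ_{i_1, \dotsc, i_j}$ generalising the $\cQ_i$ of \cref{sec:upperbounds}: permutations that first sort by coordinate $i_1$, then by $i_2$, then by $i_3$, and so on, finally breaking remaining ties lexicographically, with all choices of orientation at each stage. Each such family has $\cO(1)$ size, and aggregating over all tuples of length at most $J$ contributes $\cO(d^J) = \cO((\log n)^{J/(J+1)}) = o(\log n)$ permutations in total. One then verifies that this enriched family $2^{k-1}$-shatters every $k$-subset by using the structural lemma above to reduce to a case analysis based on trie type.

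The main obstacle is precisely the structural-to-shattering step. In the $k = 4$ case of \cref{thm:ubsqrtlogn}, one exploits that the $k - 1 = 3$ consecutive gaps $\pos{x_\ell}{x_{\ell+1}}$ must collide in pairs by pigeonhole on $\cI(X)$, yielding a $\cQ_i$-ordered pair disjoint from the $\cP$-ordering. For larger $k$ one expects higher-order coincidences, but extracting from them a tuple $(i_1, \dotsc, i_j)$ such that $\cQ_{i_1, \dotsc, i_j}$ induces genuinely new permutations on $X$ relative to both $\cP$ and the other tuple-families appears to require a Ramsey-type argument on trie shapes (perhaps mirroring \cref{lem:monotree}) together with a delicate accounting to prevent duplicated contributions across tuples. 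Getting the shatter count to grow all the way up to $2^{k-1}$, rather than stalling at an intermediate power of two, is where the real difficulty lies.
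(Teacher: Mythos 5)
This statement is \cref{conj:upologn}: it is a \emph{conjecture} which the paper does not prove, and indeed the authors explicitly remark that constructions of exactly the type you describe ``seem to be far away from reaching $t = 2^{k-1}$.'' Your submission is, by its own admission, a plan rather than a proof: the decisive step --- showing that the enriched family $2^{k-1}$-shatters every $k$-subset --- is flagged as ``where the real difficulty lies'' and is not carried out. The parameter bookkeeping ($d = \ceil{(\log_2 n)^{1/(J+1)}}$, a $k$-lex-shattering base family of size $\cO((\log n)^{J/(J+1)})$, and $\cO(d^J)$ auxiliary permutations) is internally consistent and does give a family of size $o(\log n)$, but that only reproduces the easy half of the argument.

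The concrete gap is in the ``refined structural lemma'' you posit. \cref{lem:lexshatter2h} extracts structure from the hypothesis that $X$ is not $2k$-shattered, and that threshold is tight against \cref{lem:product}: a single slice of size at least $3$ already forces $\prod_i \abs{\slice{Y_i}}! \ge 2k$, which is why failure of $2k$-shattering pins down the trie completely (all slices of size $2$, $\abs{\cI(X)} = \ceil{\log_2 k}$, aligned slice labels). For the conjecture you must instead analyse sets that merely fail to be $2^{k-1}$-shattered, a far weaker hypothesis once $k \ge 5$: for $k = 8$, say, a set $X$ whose trie has $\abs{\cI(X)} = 2$ with both slices of size $3$ is guaranteed only $3! \cdot 3! = 36 < 2^{k-1} = 128$ permutations by \cref{lem:product}, yet it violates every clause of your proposed structure. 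So the dichotomy ``either $2^{k-1}$-shattered or slices of size close to $2$'' does not hold, and the case analysis would have to cover a zoo of trie shapes. Worse, the shortfall to be made up is a multiplicative factor of order $2^{k - \log_2 k}$ over the guaranteed $2^{\ceil{\log_2 k}}$, while each family $\cQ_{i_1, \dots, i_j}$ has constant size and, as used in the proof of \cref{thm:ubsqrtlogn}, contributes only an additive constant number of new induced permutations; your plan contains no mechanism for making these contributions compound multiplicatively. This is precisely the obstruction the authors identify, and the proposal does not overcome it.
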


Since we did not try to optimise the upper bound on $t$ in \cref{thm:ubsqrtlogn}, we believe that constructions similar to ours could prove that $f_k(n,t) = o(\log n)$ for a larger range of values of $t$. However, they seem to be far away from reaching $t = 2^{k-1}$.

In this context it seems important to mention the following inspiration for our approach that comes from the Erd\H{o}s-Gy\'{a}rf\'{a}s problem~\cite{erdos1997AVO}. If $\cP$ is a family of permutations of $[n]$, assign a colour $c_{x,y}$ to every pair $x, y \in [n]$ as follows. For $\rho \in \cP$, let
\[
    c_{x,y}(\rho) = \begin{cases}
        1 & \text{if } x <_\rho y, \\
        0 & \text{otherwise}.
    \end{cases}
\]
This colouring uses $2^{\abs{\cP}}$ colours, and if a subset $X \subseteq [n]$ of size $k$ spans at most $\ell$ colours, then $\cP$ induces at most $2^\ell$ permutations on $X$. We implicitely used this approach to prove \cref{thm:lblogn,thm:lbsqrtlogn}. Unfortunately, the converse does not hold: $X$ can simultaneously span at least $\ell$ colours and fail to be $2^\ell$-shattered by $\cP$.

Nevertheless, Eichhorn and Mubayi~\cite{eichhorn2000note} gave a colouring of $K_n$ with $2^{\Theta(\sqrt{\log n})}$ colours such that all subsets of size $k$ span at least $\ell = 2 \ceil{\log_2 k} - 2$ colours. This inspired our construction for \cref{thm:ubsqrtlogn}, even if our construction does not $2^\ell$-shatter all subsets of size $k$. Note that Conlon, Fox, Lee, and Sudakov~\cite{conlon2015erdos} gave a colouring of $K_n$ with $2^{o(\log n)}$ colours such that all subsets of size $k$ span at least $k - 1$ colours. We hope that these colourings could inspire further progress towards better upper bounds on $f_k(n,t)$.

\bibliographystyle{style_edited}
\bibliography{references}
\end{document}